\documentclass{article}

\usepackage{mathtools}
\usepackage{amsthm}
\usepackage{amssymb}
\usepackage{dsfont}
\usepackage{nicefrac}
\usepackage{todonotes}
\usepackage{xcolor}
\newcommand{\R}{\mathds{R}}
\newcommand{\co}{\operatorname{co}}

\newcommand{\F}[1]{\mathcal{F}(#1)}
\newcommand{\Cf}{\mathcal{C}}
\newcommand{\supp}[2]{\operatorname{supp}(#2,#1)}
\newcommand{\argmin}{\mathop{\mathrm{argmin}}}

\newcommand{\mto}{\rightrightarrows}
\usepackage{pgfplots}
\usepackage[hidelinks]{hyperref}
\usepackage{caption}
\usepackage{subcaption}
\usepackage{enumitem}

\usepackage[backend=biber,style=numeric-comp,doi=false,isbn=false,url=false,giveninits]{biblatex}
\bibliography{references.bib}

\newcommand{\epi}{\operatorname{epi}}
\newcommand{\dom}{\operatorname{dom}}
 
\newcommand{\im}{\operatorname{Im}}
\newcommand{\ri}{\operatorname{ri}} 
\newcommand{\abs}[1]{\lvert #1 \rvert}
\pgfplotsset{compat=1.17}
\newtheorem{theorem}{Theorem}
\newtheorem{corollary}{Corollary}
\newtheorem{proposition}{Proposition}
\theoremstyle{plain}
\newtheorem{definition}{Definition}
\newtheorem{lemma}{Lemma}
\newtheorem{assumption}{Assumption}
\newtheorem{problem}{Open Problem}
\newtheorem{example}{Example}
\theoremstyle{remark}
\newtheorem{remark}{Remark}

\newcommand{\email}[1]{\href{mailto:#1}{Email: #1}}

\title{Variational properties of the abstract subdifferential operator}

\author{R. D\'iaz Mill\'an \thanks{Deakin University, 75 Pigdons Rd, Waurn Pond, VIC 3216
  (\email{r.diazmillan@deakin.edu.au})} \and N. Sukhorukova \thanks{Swinburne University of Technology, PO Box 218, Hawthorn, VIC 3122
  (\email{nsukhorukova@swin.edu.au})} \and J. Ugon \thanks{Deakin University, 75 Pigdons Rd,
 Waurn Pond, VIC 3216 (\email{julien.ugon@deakin.edu.au})}}
 \date{}

\begin{document}
  \maketitle
  \begin{abstract}
    Abstract convexity generalises classical convexity by considering the suprema of functions taken from an arbitrarily defined set of functions. These are called the abstract linear (abstract affine) functions. The purpose of this paper is to study the abstract subdifferential. We obtain a number of results on the calculus of this subdifferential: summation and composition rules, and prove that under some reasonable conditions, the subdifferential is a maximal abstract monotone operator.
    Another contribution of this paper is a counterexample that demonstrates that the separation theorem between two abstract convex sets is generally not true. The lack of the extension of separation results to the case of abstract convexity is one of the obstacles in the development of numerical methods based on abstract convexity.
  \end{abstract}
  
  \textbf{Keywords:} {abstract convex functions, abstract subdifferential, abstract monotonicity}
  
\textbf{MSC2020:}{ 52A01, 90C30,  47N10,  49J52,  49J53}

  \section{Introduction}

  Given a set $L$ of functions (also known as elementary functions), a function is $L$-convex, or \emph{abstract convex} when it can be expressed as the supremum of functions from $L$. Abstract convexity (also known as convexity without linearity) has been extensively studied since the 1970s, and is the subject of at least three monographs~\parencite{FoundationsOfPallas1997,rubinov:2000,Singer}, and many papers~\parencite{andramonov:2002,bui.ea:2021,burachik.ea:2004,BurachikRubinov,diaz-millan.ea:2023,kutateladze72} just to name a few. Many notions and results from classical convexity have been generalised, including duality~\parencite{kutateladze72}, the conjugate function~\parencite{rubinov:2000}, the subdifferential~\parencite{rubinov:2000} and normal cones~\parencite{jeyakumar.ea:2007}. This paper aims to investigate abstract subdifferential calculus, which will be a significant step towards the development of numerical methods based on abstract convexity. 

  In convex analysis, the subdifferential plays an essential role in developing algorithms. A major challenge in generalising the properties of the convex subdifferential to the abstract convex subdifferential is that most proofs rely on Minkowski's theorem on the separation of sets. However, such a theorem is not known in abstract convex analysis, and in fact, we show in this paper that its generalisation is not true in general. More specifically, there might not exist a function $l\in L$ separating two $L$-convex sets. Consequently, we prove the results in this paper without resorting to such separation results. 

  Another challenge in the application of abstract convexity to practical problems is the choice of an efficient set of elementary  functions. In some cases, for example uniform approximation problems, the choice is straightforward, but this situation is rather exceptional. For some classes of functions, such as quasiconvex functions, some elementary sets of functions are known~\cite{rubinov:2000,daniilidis.ea:2002} but counter-examples presented in~\cite{burachik.ea.08} and in this paper show that they do not always enable the generalisation of results such as the maximality of the subdifferential operator, or the maximum rule. 

  The study of the abstract subdifferential is the subject of a few papers, and several results, such as the sum rule or the maximal monotonicity of the subdifferential operator~\cite{jeyakumar.ea:2007,BurachikRubinov,doagooei.ea:2013} were generalised under some assumptions. Nearly all of these papers make the assumption that the set $L$ is closed under the sum (i.e., $L$ is additive). This is a strong assumption, which excludes many families of functions of practical interest, such as quasilinear and quasiconvex functions. It is also generally assumed that $0\in L$, which is a more reasonable assumption, as this it is required to verify optimality conditions. Finally, several papers require that abstract convex functions are additive, that is, they only consider functions $f$ such that $f(x)+f(y)=f(x+y)$, also adding further hypotheses on the domain $X$ of these functions. In most of the present paper we make no such assumption on $X$ and $L$. In fact we derive a number of subdifferentiable calculus rules under very general conditions, in such a way that these rules generalise fairly precisely subdifferential calculus, both in the convex case and the known abstract convex cases. Only in the last part of the paper, when we investigate the maximal monotonicity of the subdifferential operator, do we introduce a few assumptions on the set $X$. 

  The paper is divided into three parts. First, in Section~\ref{sec:preliminaries} we recall some important results and definitions of abstract convexity and clarify the notations used throughout the paper. Then, in Section~\ref{sec:calculus}, we generalise several well-known properties, including the sum rule on the subdifferential of the sum of abstract convex functions and the composition rule. Finally, in Section~\ref{sec:monotoneoperator}, we prove that under suitable conditions the abstract subdifferential is a maximal abstract monotone operator. 
  Section~\ref{sec:conclusions} contains the conclusions and open problems.

  \section{Preliminaries}\label{sec:preliminaries}
 
   We start by recalling the main definitions and properties of abstract convexity. We will use the notations introduced by~\textcite{rubinov:2000}.

  \subsection{Definitions}

  Let $X$ be a set and $\F{X}$ be the set of all real-valued functions with domain on $X$ as well as the function identically equal to $-\infty$:
  \[\F{X} = \{f:X\to \R\cup\{+\infty\}\}\cup \{-\infty:X\to \R, -\infty(x)=-\infty\}.\] Let $H\subset \F{X}$ be an arbitrary set of functions.

  \begin{definition}[Abstract Convexity~\parencite{rubinov:2000}]
    A function $f\in \F{X}$ is said to be \emph{($(H,X)$-convex} (or simply \emph{$H$-convex} when the context is clear) if there exists a set $U\subset H$ such that for any $x\in X$, $f(x) = \sup_{u\in U} u(x)$.
  \end{definition}

  \begin{definition}[Support Set~\parencite{rubinov:2000}]
  The \emph{$H$-support set} of a function $f\in \F{X}$ is defined as:
  \[ \supp{H}{f} = \{u\in H: u\leq f\} \]
  where $u\leq f$ means that $u(x)\leq f(x), \forall x\in X$.
  \end{definition}

  The $H$-support sets of $H$-convex functions are called \emph{$(H,X)$-convex sets}, or \emph{$H$-convex sets} when the context is clear. Note that the $H$-support set of every function $f\in\F{X}$ is $H$-convex, and there is a one-to-one mapping between $H$-convex functions and $H$-convex sets.

  The intersection of $H$-convex sets is $H$-convex:
  \begin{proposition}[\parencite{diaz-millan.ea:2023}]\label{prop:intersection}
    Let $A_i: i\in I$ be a family of $H$-convex sets. Then $\cap_{i\in I}A_i$ is also $H$-convex.
  \end{proposition}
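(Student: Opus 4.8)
The plan is to exploit the correspondence between $H$-convex sets and support sets. By definition, a subset $A\subseteq H$ is $(H,X)$-convex exactly when $A=\supp{H}{f}$ for some $f\in\F$; moreover, as observed just after the definition of the support set, $\supp{H}{f}$ is $H$-convex for \emph{every} $f\in\F$. Thus it is enough to produce a single $g\in\F$ with $\bigcap_{i\in I}A_i=\supp{H}{g}$.

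Write each $A_i=\supp{H}{f_i}$ with $f_i\in\F$, and let $g\coloneqq\inf_{i\in I}f_i$ be the pointwise infimum. The heart of the argument is the chain of equivalences, for $u\in H$,
\[
u\in\bigcap_{i\in I}A_i \iff u\le f_i \ \text{ for all } i\in I \iff u\le g,
\]
which is immediate from the fact that $u(x)\le\inf_i f_i(x)$ for all $x\in X$ is the same condition as $u(x)\le f_i(x)$ for all $i$ and all $x$. Hence $\bigcap_{i\in I}A_i=\{u\in H:u\le g\}$.

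The one technical wrinkle is that $g$ need not lie in $\F$: when $I$ is infinite, $g$ may take the value $-\infty$ at some, but not all, points of $X$. I would dispose of this as follows. If $g(x_0)=-\infty$ for some $x_0\in X$, then no finite function $u\in H$ satisfies $u\le g$, so $\bigcap_{i\in I}A_i=\emptyset=\supp{H}{-\infty}$, which is $H$-convex since $-\infty\in\F$. Otherwise $g$ maps $X$ into $\R\cup\{+\infty\}$, so $g\in\F$ and $\bigcap_{i\in I}A_i=\supp{H}{g}$ is $H$-convex. (Alternatively, one can avoid the case split entirely by instead taking $g\coloneqq\sup_{u\in\bigcap_i A_i}u$, which belongs to $\F$ by construction, and verifying directly the two inclusions $\bigcap_i A_i\subseteq\supp{H}{g}$ and $\supp{H}{g}\subseteq\bigcap_i A_i$, the latter using $g\le f_i$ for every $i$.) I do not expect any genuine difficulty here: once the support-set characterisation is in place, the result is essentially the triviality that an infimum dominates $u$ if and only if every term does.
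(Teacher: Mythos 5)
Your argument is correct: reducing to the support-set characterisation, observing that $u\le\inf_{i}f_i$ iff $u\le f_i$ for all $i$, and disposing of the points where the infimum is $-\infty$ (where no finite $u\in H$ can sit below, so the intersection is $\emptyset=\supp{H}{-\infty}$) is exactly what is needed; note the paper itself only cites \parencite{diaz-millan.ea:2023} for this proposition and gives no proof to compare against. Your parenthetical alternative, taking $g=\sup_{u\in\cap_i A_i}u$ and checking both inclusions, is the slightly cleaner route, since it avoids the case split and exhibits $\cap_i A_i$ directly as the support set of an $H$-convex function (in the spirit of Proposition~\ref{prop:convexhullsup}), rather than relying on the remark that support sets of arbitrary functions are $H$-convex.
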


  For a set $L\subset \F{X}$, we denote by $H_L$ its vertical closure: $H_L = \{h_{lc} = l-c: l\in L, c\in \R\}$. The functions in $H_L$ are called \emph{L-affine} or \emph{abstract affine}. 
  It can also be necessary to consider the vertical shift $c\in \R$ of an affine function $l-c$ as a variable. In this case, we denote the set of such functions by $L\times \{-1\}$ and its domain $X\times \R$, where $-1(c)\coloneqq -c$, and $(l,-1)\in L\times \{-1\}$ is defined as $(l,-1)(x,c) \coloneqq l(x)-c$.

    \begin{definition}[Abstract Convex Hull of a function~\parencite{rubinov:2000}]\label{def:convexhullfunction}
      Let $f\in\F{X}$. The function $\co_H f$ defined by
      \[
      \co_H f(x) \coloneqq \sup\{h(x): h\in \supp{H}{f}\}
      \]
      is called the \emph{$H$-convex hull} of the function $f$.
    \end{definition}
    It is easy to see that $\co_H f$ is the $H$-convex lower envelope of the function $f$, that is, the supremum of all $H$-convex functions that lie below $f$. In particular, $f$ is $H$-convex if and only if $f=\co_H f$. For a given set $L$ of abstract linear functions, we will denote by $L_x$ the set of abstract affine functions that all vanish at $x$: \begin{equation}
      L_x = \{l_x\coloneqq l-l(x): l \in L\}. \label{eq:Lx}
    \end{equation}
    The set $L_x$ has the same vertical closure as $L$, and its relevance will be clear when we discuss the abstract subdifferential.
    
  \begin{definition}[Abstract Convex Hull of a set~\parencite{rubinov:2000}]\label{def:convexhullset}
    The intersection of all $H$-convex sets containing a set $C\subset H$ is called the \emph{abstract convex hull} of $C$ and denoted $\co_H C$. 
  \end{definition}

  \begin{proposition}[{\parencite[Proposition 1.1 and Corollary 1.1]{rubinov:2000}}]\label{prop:convexhullsup}
    The $H$-convex hull of a set $C$ is $H$-convex. More specifically, it is the smallest $H$-convex set containing $C$, and $\co_H C = \supp{H}{\sup_{l\in C} l}$.
  \end{proposition}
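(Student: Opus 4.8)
The plan is to verify the three assertions in turn, using only Proposition~\ref{prop:intersection} together with the elementary monotonicity of the assignment $f\mapsto\supp{H}{f}$ (namely, $f\le g$ implies $\supp{H}{f}\subseteq\supp{H}{g}$, which is immediate from the definition of the support set). Set $g\coloneqq\sup_{l\in C}l$, a member of $\F$. The starting observation is that $\supp{H}{g}$ is itself an $H$-convex set --- recall that the $H$-support set of any member of $\F$ is $H$-convex --- and that it contains $C$, since $C\subset H$ and every $l\in C$ satisfies $l\le g$. Hence the family of $H$-convex sets containing $C$ is nonempty, so $\co_H C$, being the intersection of this family, is well defined, and by Proposition~\ref{prop:intersection} it is $H$-convex: this is the first claim. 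The second claim, that $\co_H C$ is the smallest $H$-convex set containing $C$, then follows directly from its definition as an intersection --- it contains $C$ because every member of the family does, and it is contained in every $H$-convex set $A\supseteq C$ because such an $A$ is one of the intersected sets.

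For the identity $\co_H C=\supp{H}{g}$ I would prove the two inclusions separately. The inclusion $\co_H C\subseteq\supp{H}{g}$ is a special case of the minimality established in the second claim, since $\supp{H}{g}$ is an $H$-convex set containing $C$. For the reverse inclusion, invoke the first claim to write $\co_H C=\supp{H}{f}$ for some $H$-convex function $f$; then $C\subseteq\co_H C=\supp{H}{f}$ forces $l\le f$ for every $l\in C$, whence $g=\sup_{l\in C}l\le f$, and the monotonicity of the support set gives $\supp{H}{g}\subseteq\supp{H}{f}=\co_H C$. This proves $\supp{H}{g}=\co_H C$.

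I do not anticipate a real obstacle: the whole argument is bookkeeping with the definitions, and --- in contrast with several of the later results in the paper --- it needs no separation statement. The two points that merit a moment's care are that the family of $H$-convex supersets of $C$ is nonempty (so the intersection defining $\co_H C$ is not vacuous), which is why the support set $\supp{H}{g}$ is exhibited at the outset, and the order-preserving property of $f\mapsto\supp{H}{f}$; the degenerate case $C=\emptyset$, where $g$ is the constant $-\infty$ and $\supp{H}{g}=\emptyset$, is handled by exactly the same reasoning.
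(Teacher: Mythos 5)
Your argument is correct: it uses only the definitions, the monotonicity of $f\mapsto\supp{H}{f}$, Proposition~\ref{prop:intersection}, and the fact that every $H$-convex set is by definition the support set of an $H$-convex function, and both inclusions for $\co_H C=\supp{H}{\sup_{l\in C}l}$ are established soundly (including the nonemptiness of the family being intersected and the case $C=\emptyset$). The paper itself gives no proof of this proposition, citing it from Rubinov, and your argument is essentially the standard one found there, so there is nothing to flag.
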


%

  Each abstract convex concept has a dual counterpart, which can be obtained by considering a coupling function \(\langle\cdot,\cdot\rangle: L\times X \to \R\), \(\langle u,x\rangle\coloneqq u(x)\). This coupling function enables one to obtain dual versions of the properties of $(L,X)$-convex functions and $(L,X)$-convex sets.

  \begin{definition}[$(X,L)$-convex set]
    Let $L\subset \F{X}$ be the set of abstract linear functions. We say that the set $C\subset X$ is a $(X,L)-$convex set, if there exists a function $\lambda:L\rightarrow (\R\cup \{-\infty,+\infty\})$ such that $C=\{x\in X: u(x)\leq \lambda(u), \forall u\in L\}$.
  \end{definition}

  \begin{definition}[$(X,L)$-convex function]
    For $L\subset \F{X}$, the function $\lambda: L\to (\R\cup \{-\infty,+\infty\})$ is said to be $(X,L)$-convex if there exists $C\subset X$ such that for any $u\in L$, $\lambda(u) = \sup_{x\in C}u(x)$.
  \end{definition}

  It is clear that the set $C$ in the above definition is $(X,L)$-convex.

  \begin{definition}[Support set, dual version]\label{def:support_dual} For $L\subset \F{X}$, the support set of a function $\lambda: 
L\to (\R\cup \{-\infty,+\infty\})$ is defined as $\supp{X}{\lambda} \coloneqq \{x\in X: u(x)\leq \lambda(u), \forall u\in U\}$.

  \end{definition}

  Then it follows from the above definition and Proposition~\ref{prop:intersection} that
  \begin{proposition}
      Let $C_i:i\in I$ be a family of $(X,L)-$convex sets. Then $\cap_{i\in I}C_i$ is a $(X,L)-$convex set.
  \end{proposition}

  The set of all $(X,L)$-convex sets on the space $X$ (respectively $L$) is denoted by $\Cf (X,L)$ and by symmetry the set of all $(L,X)$-convex sets is denoted by $\Cf (L,X)$.
   \begin{definition}
       The abstract convex hull of a set $A\in X$ is defined as the intersection of all abstract convex sets containing $A$, denoted by $\co_X A:=\cap_{C\in \Cf (X,L)}\{C:A\subset C\}$.
   \end{definition}

  Separation results are essential in convex analysis.  The following proposition is straightforward (separation of a point from a set).

  \begin{proposition}[ Separating property~\cite{rubinov:2000}] Given an $H$-convex set $U$ and a point $l\in H\setminus U$ there exists $x\in X$: $l(x)>u(x)$ for any $u\in U$.
  \end{proposition}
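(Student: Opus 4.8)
The plan is to represent the $H$-convex set $U$ as the support set of its own pointwise supremum, after which the separating point falls out immediately. First I would set $f \coloneqq \sup_{u\in U} u$, which is a well-defined element of $\F$ (it may take the value $+\infty$ at some points, which is permitted). Since $U$ is $H$-convex it is its own smallest enclosing $H$-convex set, so $\co_H U = U$; on the other hand Proposition~\ref{prop:convexhullsup}, applied with $C = U$, gives $\co_H U = \supp{H}{\sup_{u\in U} u} = \supp{H}{f}$. Hence $U = \supp{H}{f} = \{h\in H : h\leq f\}$.

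Next I would translate the hypothesis $l\in H\setminus U$. Because $l\in H$ but $l\notin\supp{H}{f}$, the definition of the support set forces $l\not\leq f$; that is, there is a point $x\in X$ with $l(x) > f(x)$. Finally, for every $u\in U$ we have $u(x)\leq \sup_{v\in U} v(x) = f(x) < l(x)$, which is exactly the asserted strict inequality $l(x) > u(x)$ for all $u\in U$.

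I do not expect a real obstacle: the whole argument is essentially a one-line consequence of Proposition~\ref{prop:convexhullsup}. The only points needing a word of care are the degenerate case $U=\emptyset$, equivalently $f\equiv-\infty$, in which the conclusion holds vacuously for any $x$ provided $X$ is nonempty, and the fact that $f$ may be $+\infty$-valued, which is harmless because $\F$ explicitly admits that value. If one preferred to avoid invoking Proposition~\ref{prop:convexhullsup}, the identity $U=\supp{H}{f}$ can instead be obtained directly from the definitions: writing $U=\supp{H}{g}$ for an $H$-convex function $g$ (such $g$ exists by definition of an $H$-convex set), $H$-convexity of $g$ gives $g=\co_H g$, and by Definition~\ref{def:convexhullfunction} one has $\co_H g(x)=\sup\{h(x):h\in\supp{H}{g}\}=\sup_{u\in U}u(x)=f(x)$, so $g=f$ and the rest of the argument is unchanged.
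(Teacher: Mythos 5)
Your proof is correct: the identity $U=\supp{H}{\sup_{u\in U}u}$ (via Proposition~\ref{prop:convexhullsup} or directly from the definitions) immediately yields a point $x$ with $l(x)>\sup_{u\in U}u(x)\geq u(x)$ for all $u\in U$, and your handling of the empty and $+\infty$ cases is fine. The paper itself gives no proof, simply citing Rubinov and calling the result straightforward, and your argument is exactly the standard one intended there.
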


  \begin{corollary}[Dual separating property]
    Given an $(X,L)$-convex set $C$ and a point $x\in X\setminus C$ there exists a function $u\in H$: $u(x)>u(y)$ for any $y\in C$.
  \end{corollary}

  The above corollary has a simple geometric interpretation: for any abstract convex set $C\subset X$ and any point $x$ outside of that set, one can find an abstract affine function $u\in H$ with a level set (ie, an abstract hyperplane) separating the point $x$ from the set $C$. One may wonder whether it is possible to separate two abstract convex sets using such an abstract hyperplane. We show that this is not the case in general. Let us first formalise the notion of set separation below.

 The next definition states the separation between two abstract convex sets. 
 \begin{definition}[Separation $(X,H)$-convex sets]
   Consider a set of functions $H\in \F{X}$, and the $(X,H)$-convex sets $A,B \subset X$. We say that $A$ is $H$-separable from $B$ if there exists $u\in H$ such that $u(x)\leq u(y)$ for all $x\in A$ and $y\in B$. The separation is strict of the inequality is strict. 
 \end{definition}
     
  \begin{definition}[Separation of $(H,X)$-convex sets]
   Consider a family of functions $H\in \F{X}$ and the $(X,H)$-convex sets $A,B \subset H$. We say that $A$ is $X$-separable from $B$ if there exists $x\in X$ such that $u(x)\leq l(x)$ for all $u\in A$ and $l\in B$. The separation is strict of the inequality is strict. 
 \end{definition}

  The following example illustrates that in general, it is not always possible to separate two disjoint $(X,H)$-convex sets.

  \begin{example}
    Let $X=\R$ and  $H = \{-\abs{x-1} +2, -\abs{x+1}+2, -\abs{x}+2,0\}$. Consider the $H$-convex sets $A=\{-\abs{x-1} +2, -\abs{x+1}+2\}$ and $B=\{ -\abs{x}+2,0\}$. Refer to Figure~\ref{fig:counterExampleSeparation}.

    It is easy to see that $A$ and $B$ are both $H$-convex, and disjoint. Indeed, let 
    $$f_A(x) =  \sup_{u\in A} u(x).$$ Then $$\{x: -\abs {x}+2 > u(x)~\forall u\in A\} = \{x: \abs{x} + 2 > f_A(x)\} = \left(-\frac{1}{2},\frac{1}{2}\right).$$ 
    Therefore $-\abs{x}+2$ is not in $\supp{H}{f_A} = \co_H A$. Likewise, 
    $$\{x: 0 > u(x)~\forall u\in A\} = (-\infty,-3)\cup (3,+\infty).$$ Therefore $0\notin \supp{H}{f_A}$. Yet, since $[(-\infty,-3)\cup (3,+\infty)]\cap \left(-\frac{1}{2},\frac{1}{2}\right)=\emptyset$, there is no point $x\in X$ that separates all functions in $B$ from $A$.

     On the other hand, let $f_B(x) = \sup_{v\in B} v(x)$ then $$\{-\abs{x\pm 1}+2> v(x)~ \forall v\in B\} = \{-\abs{x\pm 1}+2 > f_B(x)\} = \left(\mp 3,\pm \frac{1}{2}\right),$$ which shows that $A\cap \supp{H}{f_B} = \co_H B = \emptyset$. However, since $$\left(-3,-\frac{1}{2}\right) \cap \left(\frac{1}{2},3\right) = \emptyset,$$ there is no point $x\in X$ that separates all functions in $A$ from $B$.

    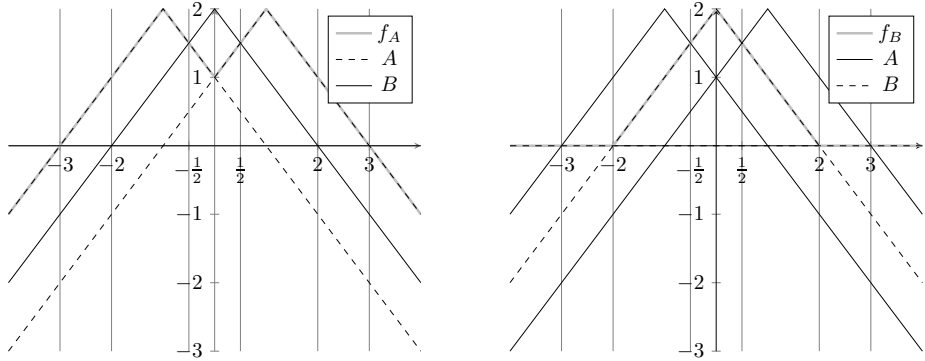
\begin{figure}
      \centering
      \begin{subfigure}[b]{0.45\textwidth}
       \begin{tikzpicture}[scale=0.8]
       \begin{axis}[axis lines = center, axis line style={thin, gray},xtick={-3,-2,-0.5,0.5,2,3},xticklabels={$-3$,$-2$,$-\frac{1}{2}$,$\frac{1}{2}$,$2$,$3$},grid style={line width=.1pt,draw=gray},xmajorgrids]
       \addplot[lightgray,very thick] coordinates {(-4,-1) (-1,2) (0,1) (1,2) (4,-1)}; \addlegendentry{$f_A$}
       \addplot[dashed] coordinates {(-4,-1) (-1,2) (4,-3)};\addlegendentry{$A$}
       \addplot[solid] coordinates {(-4,-2) (0,2) (4,-2)};\addlegendentry{$B$}
       \addplot[dashed] coordinates {(-4,-3) (1,2) (4,-1)};
       \addplot[solid] coordinates {(-4,0) (4,0)};
       \end{axis}
       \end{tikzpicture}
       \caption{Neither functions from $B$ are in the support set of the function $f_A$, and so $A$ is $H$-convex. Yet, there is no point $x\in \R$ at which both functions from $B$ are above $f_A$. Therefore the set $B$ cannot be separated from the set $A$.}
      \end{subfigure}\hfill
      \begin{subfigure}[b]{0.45\textwidth}
       \begin{tikzpicture}[scale=0.8]
         \begin{axis}[axis lines = center,xtick={-3,-2,-0.5,0.5,2,3},xticklabels={$-3$,$-2$,$-\frac{1}{2}$,$\frac{1}{2}$,$2$,$3$},grid style={line width=.1pt,draw=gray},xmajorgrids]
           \addplot[lightgray,very thick] coordinates {(-4,0) (-2,0) (0,2) (2,0) (4,0)}; \addlegendentry{$f_B$}
           \addplot[solid] coordinates {(-4,-1) (-1,2) (4,-3)};\addlegendentry{$A$}
           \addplot[dashed] coordinates {(-4,-2) (0,2) (4,-2)};\addlegendentry{$B$}
           \addplot[solid] coordinates {(-4,-3) (1,2) (4,-1)};
           \addplot[dashed] coordinates {(-4,0) (4,0)};
         \end{axis}
       \end{tikzpicture}
       \caption{Neither functions from $A$ are in the support set of the function $f_B$, and so $B$ is $H$-convex. Yet, there is no point $x\in \R$ at which both functions from $A$ are above $f_B$. Therefore the set $A$ cannot be separated from the set $B$.}
      \end{subfigure}
      \caption{The sets $A$ and $B$ are both $H$-convex and disjoint, but they cannot be separated by a point $x$.}\label{fig:counterExampleSeparation}
    \end{figure}
  \end{example}

  From here onward, we assume that $L\subset \F{X}$ is an arbitrary set of functions.

  \begin{definition}[Abstract subdifferential~\parencite{rubinov:2000}]
    The $L$-subdifferential of a function $f\in\F{X}$ at a point $x\in X$ is the set
    \[ \partial_L f(x) = \{l\in L: f(y)\geq f(x) + l(y) - l(x), \forall y\in X\}. \]
  \end{definition}

  We recall that the sublevel sets and the epigraph of a function $f$ are defined as follows:
  \begin{align*}
    S_c(f) &= \{x\in X: f(x)\leq c\}\\
    \epi(f) &= \{(x,c) \in X\times \R: f(x)\leq c\}
  \end{align*}
  When the domain of $f$ is ambiguous we denote the epigraph of $f$ over the domain $X$ as $\epi_X f$.

  Note that two functions are equal if and only if their epigraphs are equal.

  \begin{proposition}\label{prop:convexityepigraph}
  A function $f\in \F{X}$ is $L-$convex if and only if its epigraph is $(X\times \R,L\times \{-1\})$-convex.
  \end{proposition}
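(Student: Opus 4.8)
The plan is to unwind what $(X\times\R,\,L\times\{-1\})$-convexity of a subset of the ground set $X\times\R$ means, and then reduce everything to one elementary identity between suprema and intersections of epigraphs. In the abstract-convexity framework applied to the ground set $X\times\R$ with the elementary functions $L\times\{-1\} = \{(x,c)\mapsto l(x)-c : l\in L\}$, a subset $C\subseteq X\times\R$ is $(X\times\R,\,L\times\{-1\})$-convex precisely when it is an intersection of the ``elementary sublevel sets''
\[
P_l \;\coloneqq\; \{(x,c)\in X\times\R : l(x)-c\le 0\} \;=\; \{(x,c)\in X\times\R : l(x)\le c\} \;=\; \epi(l),\qquad l\in L,
\]
taken over some subfamily of $L$; this is the ground-set counterpart of Propositions~\ref{prop:intersection} and~\ref{prop:convexhullsup}. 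The only other ingredients are the routine identity $\epi\bigl(\sup_{i\in I}u_i\bigr)=\bigcap_{i\in I}\epi(u_i)$, valid for any family $(u_i)_{i\in I}$ in $\F$ (the empty family giving $\sup_{\varnothing}=-\infty$ and $\epi(-\infty)=X\times\R$), and the remark, recorded just before the statement, that a function is determined by its epigraph.

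With these in hand both implications are immediate. If $f$ is $(L,X)$-convex, write $f=\sup_{u\in U}u$ with $U\subseteq L$; then $\epi(f)=\bigcap_{u\in U}\epi(u)=\bigcap_{u\in U}P_u$ is an intersection of elementary sublevel sets, hence $(X\times\R,\,L\times\{-1\})$-convex. Conversely, if $\epi(f)$ is $(X\times\R,\,L\times\{-1\})$-convex, then $\epi(f)=\bigcap_{i\in I}P_{l_i}=\bigcap_{i\in I}\epi(l_i)=\epi\bigl(\sup_{i\in I}l_i\bigr)$ for some family $(l_i)_{i\in I}$ in $L$, and since a function is determined by its epigraph we conclude $f=\sup_{i\in I}l_i$, a supremum of elements of $L$, that is, $f$ is $(L,X)$-convex.

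The only real obstacle is the first step: making the characterisation of $(X\times\R,\,L\times\{-1\})$-convex subsets of the ground set precise and consistent with the paper's conventions. In particular one must check that these sets are the intersections of the $P_l$ with $l\in L$, and not of the sublevel sets of the larger family $\{(x,c)\mapsto h(x)-c : h\in H_L\}$ of all abstract affine functions, which would instead characterise the $H_L$-convex functions, a strictly weaker property of $f$ in general. Once this is settled, the proof is purely formal. It is worth noting that the equivalence handles degenerate cases automatically: $f\equiv-\infty$ corresponds to the empty intersection $X\times\R$, while $f\equiv+\infty$ (so $\epi(f)=\varnothing$) is $(L,X)$-convex exactly when a supremum of some subfamily of $L$ is identically $+\infty$, which is precisely when $\varnothing$ is $(X\times\R,\,L\times\{-1\})$-convex.
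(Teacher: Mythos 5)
Your proof is correct and, at bottom, runs on the same engine as the paper's: the identification of $(X\times \R,L\times\{-1\})$-convex sets with intersections of epigraphs of members of $L$, combined with the identity $\epi\big(\sup_{i}u_i\big)=\bigcap_i \epi(u_i)$. The difference is packaging: you argue directly from the equivalence $f=\sup_{u\in U}u \iff \epi f=\bigcap_{u\in U}\epi u$, whereas the paper first characterises $\supp{L}{f}$ in terms of the hull $\co_{(X\times \R),(L\times\{-1\})}\epi f$ and then tests points $(x,d)\notin\epi f$ against that hull; your route is shorter and also treats the degenerate cases ($f\equiv-\infty$, $\epi f=\emptyset$) explicitly. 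The caveat you flag but do not resolve --- that the dual-side convex subsets of $X\times\R$ must be exactly the intersections of the sets $\{(x,c):l(x)\le c\}$ with $l\in L$, rather than support sets of arbitrary extended-real functions on $L\times\{-1\}$ (the latter would admit vertical shifts, i.e.\ effectively $H_L$, and would turn the statement into a characterisation of $H_L$-convexity, under which the claimed equivalence with $L$-convexity fails, e.g.\ for $L$ the linear functions and $f(x)=x+1$) --- is precisely the point on which the paper itself is silent: the backward direction of its final ``that is'' step, and Corollary~\ref{cor:epigraphhull} ($\co_{(X\times \R,L\times\{-1\})}\epi f=\epi \co_L f$), are valid only under the same restricted reading you adopt. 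So relative to the paper's own proof there is no gap in your argument; if anything, you have isolated the one definitional issue that the paper's proof glosses over.
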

  \begin{proof}
    Let $f$ be any function and $u\in \supp{L}{f}$. Then,
    \begin{align*}
      &u\in \supp{L}{f} \\
      \iff& \forall y\in X, u(y)\leq f(y)\\
      \iff& \forall (y,c)\in X\times \R: c\geq f(y), u(y)\leq c\\
      \iff& \sup_{(y,c)\in \epi f} u(y)-c \leq 0\\
      \iff&\forall (y,c)\in \big(\co_{(X\times \R),(L\times \{-1\})} \epi f),  u(y)-c\leq 0 
    \end{align*}

    The function $f$ is $(L,X)$-convex if and only if for all $x\in X$, $d\in \R$ such that $d<f(x)$, there exists $u\in \supp{L}{f}$ such that $u(x)>d$. That is, for all $(x,d)\notin \epi f$,  $(x,d)\notin \co_{(X\times \R),(L\times \{-1\})} \epi f$.
    This proves the result.
  \end{proof}

  \begin{corollary}\label{cor:epigraphhull}
    For any function $f\in\F{X}$, $\co_{(X\times \R,L\times\{-1\})} \epi f = \epi \co_Lf$.
  \end{corollary}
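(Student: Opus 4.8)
The plan is to obtain the identity from Proposition~\ref{prop:convexityepigraph} by a double inclusion, using the description of $\co_L f$ recalled after Definition~\ref{def:convexhullfunction}: $\co_L f$ is itself $L$-convex, it satisfies $\co_L f\le f$, and it is the pointwise supremum of all $L$-convex functions lying below $f$ (hence the largest such function).

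For the inclusion ``$\subseteq$'', since $\co_L f$ is $L$-convex, Proposition~\ref{prop:convexityepigraph} shows that $\epi\co_L f$ is $(X\times\R,\,L\times\{-1\})$-convex, while $\co_L f\le f$ gives $\epi f\subseteq\epi\co_L f$. By Definition~\ref{def:convexhullset} and Proposition~\ref{prop:convexhullsup}, $\co_{(X\times\R,\,L\times\{-1\})}\epi f$ is the smallest $(X\times\R,\,L\times\{-1\})$-convex set containing $\epi f$, so it is contained in $\epi\co_L f$.

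For the inclusion ``$\supseteq$'', put $S:=\co_{(X\times\R,\,L\times\{-1\})}\epi f$. The step I expect to carry the weight of the argument is verifying that $S$ is again the epigraph of a function $g\colon X\to\R\cup\{+\infty\}$. This is where the special structure of the elementary functions $(l,-1)\colon(x,c)\mapsto l(x)-c$ is essential: being $(X\times\R,\,L\times\{-1\})$-convex, $S$ is a support set, so $S=\{(x,c)\in X\times\R : l(x)-c\le\Phi(l)\ \text{for all }l\in L\}$ for a suitable $\Phi$; because these functions are affine of slope $-1$ in the real coordinate, $S$ is upward closed in $c$ and coincides with $\{(x,c):c\ge g(x)\}$ for $g(x):=\sup_{l\in L}\bigl(l(x)-\Phi(l)\bigr)$, i.e.\ $S=\epi g$. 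Granting this, Proposition~\ref{prop:convexityepigraph} applied to $g$ gives that $g$ is $L$-convex, and $\epi f\subseteq S=\epi g$ forces $g\le f$; since $\co_L f$ is the largest $L$-convex function below $f$, we conclude $g\le\co_L f$, whence $\epi\co_L f\subseteq\epi g=S$. Combining the two inclusions proves the corollary. One can alternatively prove ``$\supseteq$'' by checking directly that every $(X\times\R,\,L\times\{-1\})$-convex set containing $\epi f$ also contains $\{(x,c):c\ge\co_L f(x)\}$, but this reduces to the same observation about support sets, so the epigraph reformulation seems the cleanest route.
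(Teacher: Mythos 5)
Your proof is correct and takes the route the paper intends: the corollary is stated without its own proof as an immediate consequence of Proposition~\ref{prop:convexityepigraph}, and your double inclusion—using that $\epi \co_L f$ is an $(X\times\R,L\times\{-1\})$-convex set containing $\epi f$, while the hull of $\epi f$ is itself an epigraph because the elementary functions $(x,c)\mapsto l(x)-c$ make every such convex set upward closed in the real coordinate—supplies exactly the elaboration the paper leaves implicit. In particular, your key step (the hull equals $\epi g$ with $g$ $L$-convex and $g\le f$, hence $g\le \co_L f$, giving the reverse inclusion) is sound.
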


  \begin{definition}[Abstract Normal Cone~\parencite{jeyakumar.ea:2007}]
    Consider a set $C\subset X$ and a point $x\in C$. The \emph{$L$-normal cone} (or \emph{abstract normal cone}) at $x$ in $C$, with respect to the set of functions $L$, denoted by $N_L(x,C)$ is defined as: 
    \[N_L(x,C):=\{l\in L: l(y)-l(x)\leq 0, \forall y\in C\}.\]
    By convention we say that $N_L(x,C) =\emptyset$ when $x\notin C$.
  \end{definition}

  \begin{proposition}\label{prop:convexityofsets}
    Suppose that the function $f\in\F{X}$ is $L$-convex. Then we have the following:
    \begin{enumerate}
      \item For any $c\in \R$, $S_c(f)$ is $(X,L)$-convex \parencite{kutateladze72,kutateladze76};
      \item For $C\subset X$, $N_{L_x}(x,C) = \{l - l(x): l\in N_L(x,C)\}$ is $(L_x,X)$-convex.
    \end{enumerate}
  \end{proposition}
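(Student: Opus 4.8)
The plan is to handle the two items separately: item~1 is essentially classical, while item~2 reduces to the observation that an abstract normal set is a support set.

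For item~1, I would write $f$ as the supremum of its support set, $f=\sup_{u\in\supp{L}{f}}u$, which is precisely what $L$-convexity of $f$ means. Then for any $c\in\R$,
\[
S_c(f)=\{x\in X: u(x)\le c\ \text{for all}\ u\in\supp{L}{f}\}=\bigcap_{u\in\supp{L}{f}}S_c(u)=\bigcap_{u\in\supp{L}{f}}S_0(u-c),
\]
an intersection of sublevel sets of abstract affine functions $u-c\in H_L$. Each such set is $(X,L)$-convex, and the family of $(X,L)$-convex subsets of $X$ is stable under arbitrary intersections, so $S_c(f)$ is $(X,L)$-convex; this is the content of \parencite{kutateladze72,kutateladze76}, which I would simply cite (the short display above being offered only for the reader's convenience).

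For item~2, I would first dispose of the boundary case $x\notin C$: by the stated convention $N_L(x,C)=\emptyset$, hence $\{l-l(x):l\in N_L(x,C)\}=\emptyset$, and likewise $N_{L_x}(x,C)=\emptyset$; moreover $\emptyset=\supp{L_x}{-\infty}$ is $(L_x,X)$-convex, since $-\infty\in\F$ is $L_x$-convex (a supremum over the empty family). So assume $x\in C$. The key observation is that every $l'\in L_x$ vanishes at $x$, by the very definition $L_x=\{u-u(x):u\in L\}$. Hence the defining inequality of the normal set, $l'(y)-l'(x)\le 0$ for all $y\in C$, collapses to $l'(y)\le 0$ for all $y\in C$, that is, $l'\le\delta_C$, where $\delta_C\in\F$ is the indicator function equal to $0$ on $C$ and to $+\infty$ elsewhere. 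Therefore $N_{L_x}(x,C)=\supp{L_x}{\delta_C}$. Running the same chain of equivalences in the opposite direction, $l\in N_L(x,C)$ iff $l\in L$ and $(l-l(x))(y)\le0$ for all $y\in C$ iff $l-l(x)\in L_x$ lies in $\supp{L_x}{\delta_C}$, which gives the set equality $N_{L_x}(x,C)=\{l-l(x):l\in N_L(x,C)\}$.

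It then remains to conclude $(L_x,X)$-convexity, and this is immediate: the $H$-support set of \emph{any} function in $\F$ is an $(H,X)$-convex set (it coincides with the support set of its $H$-convex hull, which is $H$-convex), so, applying this with $H=L_x$ and with $\delta_C\in\F$, the set $N_{L_x}(x,C)=\supp{L_x}{\delta_C}$ is $(L_x,X)$-convex. I expect no genuine analytic difficulty here; the only points requiring care are bookkeeping ones — segregating the $x\notin C$ case via the convention, and checking that $\delta_C$ is an admissible element of $\F$ so that the "support set of every function is abstract convex" principle applies — the whole substance of item~2 being the remark that functions in $L_x$ vanish at $x$.
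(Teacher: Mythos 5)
Your proposal is correct, and on item~2 it takes a genuinely different route from the paper's. The paper fixes $y\in C$, observes that $N_{L_x}(x,y)=\{l\in L_x: l(y)\le 0\}$ is the $0$-sublevel set of the evaluation function $l\mapsto l(y)$ on $L_x$, hence $(L_x,X)$-convex by item~1 with the roles of $X$ and $L$ interchanged, and then writes $N_{L_x}(x,C)=\bigcap_{y\in C}N_{L_x}(x,y)$ and invokes Proposition~\ref{prop:intersection}. You instead identify the whole normal set at once as a support set, $N_{L_x}(x,C)=\supp{L_x}{\delta_C}$, using the fact that every element of $L_x$ vanishes at $x$, and conclude by the remark that the support set of \emph{any} function in $\F$ is abstract convex. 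Both arguments are valid; yours is more direct (no appeal to item~1 or to intersection stability) and it anticipates the indicator/support-function viewpoint used later around Proposition~\ref{prop:sumnormal}, while the paper's version makes the duality between $X$ and $L_x$ explicit and recycles results already proved. You also handle the convention case $x\notin C$ explicitly, which the paper leaves implicit; that part of your argument is fine.

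For item~1, however, be aware that your argument defers the crux: the decomposition $S_c(f)=\bigcap_{u\in\supp{L}{f}}S_c(u)$ together with intersection stability only reduces the statement to the claim that $S_c(u)$ is $(X,L)$-convex for a single $u\in L$, and that claim is asserted rather than proved (you lean on the citation, which the proposition itself carries, so this is an omission rather than an error). If you want it self-contained, either argue as the paper does --- $S_c(f)$ equals the set of $y\in X$ with $l(y)\le g(l)$ for all $l\in L$, where $g(l)=\sup_{x\in S_c(f)}l(x)$, since such a $y$ satisfies $l(y)\le c$ for every $l\in\supp{L}{f}$ and hence $f(y)\le c$ --- or note that $S_c(u)$ is the $X$-support set of the function on $L$ equal to $c$ at $u$ and $+\infty$ elsewhere, so the same ``support sets are abstract convex'' principle you invoke in item~2 also finishes item~1 in one line.
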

  \begin{proof}
    \begin{enumerate}
      \item Define the function $g:L\rightarrow \R$ as $g(l):= \sup_{x\in S_c(f)} l(x)$, and let $Y=\supp{X}{g}$. It is clear from the definition that $S_c(f)\subset Y$. Let $y\in Y$. Then, $l(y)\leq g(l)$ for any $l\in L$, and in particular for any $l\in \supp{L}{f}$. In other words,

      \[ l(y) \leq \sup_{x\in S_c(f)} l(x)\leq \sup_{x\in S_c(f)} f(x) \leq c \]
      Since the inequality $l(y)\leq c$ is true for any $l\in \supp{L}{f}$, it is true for the supremum of all functions in the set $\supp{L}{f}$, and so $f(y)\leq c$, which implies that $y\in S_c(f)$. Therefore $Y=S_c(f)$, which is an $(X,L)$-convex set.

      \item For $y\in C$, define $N_L(x,y) = \{l\in L: l(y) - l(x)\leq 0\} = \{l\in L_x: l(y)\leq 0$. Note that $N_L(x,y)$ is the sublevel set of the function $y:L_x\to \R$ at the level 0, and therefore it is a $L_x$-convex set.

      Since \[\{l - l(x): l\in N_L(x,C)\} = \bigcap_{y\in C} N_L(x,y)\] we can conclude that $N_L(x,C)$ is a $L_x$-convex set, by Proposition~\ref{prop:intersection}.\qedhere
    \end{enumerate}
  \end{proof}

  Note that $N_{L\times \{-1\}}((x,f(x)),\epi(f)) = \partial_L f(x) \times \{-1\}$.

  \begin{proposition}\label{normalsubdif}
  $\partial_L f(x)=\{l\in L: (l,-1)\in N_{L\times \R}\left((x,f(x)),\epi(f)\right)\}$.
  \end{proposition}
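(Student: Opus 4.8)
The plan is simply to unwind the definition of the abstract normal set on $X\times\R$ and line it up term by term with the definition of $\partial_L f(x)$; the two sides will be seen to encode the same family of inequalities. I will assume $f(x)\in\R$, so that $(x,f(x))$ is a genuine element of $X\times\R$; the case $f(x)=+\infty$ is degenerate, as then the right-hand side is empty (no finite $l$ can satisfy $f(y)\ge f(x)+l(y)-l(x)$ for all $y$ unless $f\equiv+\infty$) and $(x,+\infty)\notin X\times\R$, so there is nothing to prove.

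Using the explicit description of $N_{L\times\R}$ recorded after the definition of the abstract normal set, with base point $(x,f(x))$ and set $C=\epi f$, membership $(l,-1)\in N_{L\times\R}((x,f(x)),\epi f)$ unwinds to
\[ l(y)-l(x)-(\gamma-f(x))\le 0\qquad\text{for every }(y,\gamma)\in\epi f. \]
First I would note that, for fixed $y$, the left-hand side is nonincreasing in $\gamma$, and that $(y,\gamma)\in\epi f$ is precisely the condition $\gamma\ge f(y)$. Hence, when $f(y)<+\infty$, quantifying over all admissible $\gamma$ amounts to imposing the inequality at the smallest such $\gamma$, namely $\gamma=f(y)$, which rearranges to $f(x)+l(y)-l(x)\le f(y)$; and when $f(y)=+\infty$ there is no admissible $\gamma\in\R$ at all, so $y$ contributes no constraint, while the inequality $f(y)\ge f(x)+l(y)-l(x)$ holds vacuously. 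Either way, the part of the condition coming from this particular $y$ is equivalent to $f(y)\ge f(x)+l(y)-l(x)$.

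Running this reduction over all $y\in X$ yields that $(l,-1)\in N_{L\times\R}((x,f(x)),\epi f)$ if and only if $f(y)\ge f(x)+l(y)-l(x)$ for every $y\in X$, which is exactly the defining condition of $l\in\partial_L f(x)$; this proves the proposition. Alternatively, one can shortcut the whole argument: since $\{-1\}\subset\R$, one has $N_{L\times\{-1\}}((x,f(x)),\epi f)=N_{L\times\R}((x,f(x)),\epi f)\cap(L\times\{-1\})$, and then the already-noted identity $N_{L\times\{-1\}}((x,f(x)),\epi f)=\partial_L f(x)\times\{-1\}$ gives the claim immediately. There is no real obstacle in this proof; the only step deserving a moment's care is the handling of the quantifier over $\gamma$ — confirming that the infimal level $\gamma=f(y)$ is the binding one and treating $f(y)=+\infty$ separately — which the monotonicity remark disposes of.
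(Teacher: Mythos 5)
Your proof is correct and follows essentially the same route as the paper's: unwind the definition of $N_{L\times\R}$ at $(x,f(x))$ over $\epi f$ and observe that the binding constraint for each $y$ is at $\gamma=f(y)$, which is exactly the subgradient inequality. Your treatment is in fact slightly more careful than the paper's (explicit monotonicity in $\gamma$ and the $f(y)=+\infty$ case), but it is the same argument.
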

  \begin{proof}
  Consider $l\in \partial_L f(x)$, then for all $y\in X$, $f(y)\geq f(x)+ l(y)-l(x)$ implying that $l(y)-l(x)+ (-1)(f(y)-f(x))\leq 0$, then $(l,-1)\in N_L((x,f(x)),\epi(f))$.

  On the other hand, suppose that $(l,-1)\in N_L((x,f(x)),\epi(f))$, then for all $(y,\lambda)\in \epi(f)$ which means that $\lambda\geq f(y)$, we have $l(y)-l(x)+(-1)(\lambda-f(x))\leq 0$. In particular, since $(y,f(y))\in \epi(f)$, then $l(y)-l(x)+(-1)(f(y)-f(x))\leq 0$ implying that $l\in \partial_L f(x)$. 
  \end{proof}

  \subsection{Abstract Conjugate}

  Let us recall a few results on the abstract conjugate.
  \begin{definition}[Abstract conjugate \parencite{rubinov:2000}]
    The \emph{$L-$abstract conjugate} of the function $f\in\F{X}$, denoted by $f^*_L:L\rightarrow \R $ is defined by 
    \[f^*_L(u):=\sup_{x\in X}\{u(x)-f(x)\}.\]
  \end{definition}
  Since for all $x\in X$,  $f^*_L(u)\geq u(x)-f(x)$ we obtain the inequality \begin{equation}
  \label{eq:Moreau}
  f^*_L(u)+f(x)-u(x)\geq 0.
  \end{equation}

  \begin{proposition}[{\cite[Proposition 7.7]{rubinov:2000}}]\label{prop:moreau}
  Given $u\in L$ and $x\in X$, $f^*_L(u)+f(x)=u(x)$ if and only if $u\in \partial_L f(x)$.
  \end{proposition}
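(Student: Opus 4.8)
The plan is to unwind the definitions of the abstract conjugate and the abstract subdifferential and observe that both sides of the claimed equivalence say exactly the same thing, namely that the map $y\mapsto u(y)-f(y)$ attains its supremum at $x$. Concretely, for the ``only if'' direction I would start from the hypothesis $f^*(u)+f(x)=u(x)$, rewrite it as $f^*(u)=u(x)-f(x)$, and then use the definition $f^*(u)=\sup_{y\in X}\{u(y)-f(y)\}$ to deduce that $u(y)-f(y)\le u(x)-f(x)$ for every $y\in X$. Rearranging this inequality gives $f(y)\ge f(x)+u(y)-u(x)$ for all $y\in X$, which is precisely the condition $u\in\partial_L f(x)$.

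For the ``if'' direction I would run the argument in reverse, but here the Fenchel--Young-type inequality \eqref{eq:Moreau} does the extra work. Assuming $u\in\partial_L f(x)$, the subgradient inequality $f(y)\ge f(x)+u(y)-u(x)$ rearranges to $u(y)-f(y)\le u(x)-f(x)$ for all $y$; taking the supremum over $y\in X$ yields $f^*(u)\le u(x)-f(x)$. On the other hand, inequality \eqref{eq:Moreau} applied at the point $x$ gives $f^*(u)\ge u(x)-f(x)$. Combining the two inequalities forces $f^*(u)=u(x)-f(x)$, i.e. $f^*(u)+f(x)=u(x)$, as required.

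There is no real obstacle here; the only point worth a moment's care is that $f^*(u)$ could a priori be $+\infty$, so one should note that the hypothesis $f^*(u)+f(x)=u(x)$ (with $f(x)$ and $u(x)$ finite, since $u\in H$ is finite-valued and $x\in\dom f$ whenever the equality can hold) already implies $f^*(u)$ is finite, and in the ``if'' direction the chain $f^*(u)\le u(x)-f(x)<+\infty$ handles finiteness automatically. Everything else is a one-line manipulation, so the write-up will be short.
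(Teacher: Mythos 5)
Your proposal is correct: the paper states this result as a citation of Rubinov's Proposition 7.7 without reproducing a proof, and your argument is exactly the standard one that proof uses — unwinding the definitions so that both conditions say the supremum defining $f^*(u)$ is attained at $x$, with the Fenchel--Young-type inequality \eqref{eq:Moreau} (equivalently, evaluating the supremum at $y=x$) supplying the reverse bound in the ``if'' direction. Nothing further is needed.
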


    \begin{proposition}[\cite{jeyakumar.ea:2007}]\label{prop:epiconjugate}
      Let $L$ be the set of abstract linear functions and $f\in\F{X}$ be any function. We have $\epi f^*_L = \supp{H_L}{f}$.
    \end{proposition}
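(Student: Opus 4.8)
The plan is to prove the set equality by a single direct chain of equivalences, unwinding the definitions of the abstract conjugate, of its epigraph, and of the $H_L$-support set, while keeping track of the identification of an abstract affine function $h_{l,c}=l-c\in H_L$ with the pair $(l,c)\in L\times\R$ (so that $\epi_L f^*\subset L\times\R$ is read as a subset of $H_L$).

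First I would fix $l\in L$ and $c\in\R$ and observe that the abstract affine function $h_{l,c}=l-c$ belongs to $\supp{H_L}{f}$ precisely when $l(x)-c\le f(x)$ for every $x\in X$. Rearranging, this says $l(x)-f(x)\le c$ for all $x\in X$, which by the definition of the supremum is equivalent to $\sup_{x\in X}\{l(x)-f(x)\}\le c$, that is, to $f^*(l)\le c$. The last inequality is exactly the statement that $(l,c)\in\epi_L f^*$. Since every element of $H_L$ is of the form $h_{l,c}$ for some $l\in L$, $c\in\R$, and every element of $\epi_L f^*$ is such a pair, the two sets coincide.

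I would then dispatch the minor points that need checking. When $f$ is the function $-\infty$ — or, more generally, when $\sup_{x\in X}\{l(x)-f(x)\}=+\infty$ — no finite $c$ satisfies the inequality, so neither does $h_{l,c}$ lie in $\supp{H_L}{f}$ nor does $(l,c)$ lie in $\epi_L f^*$; the equivalence is vacuously preserved, consistent with allowing $f^*$ to take the value $+\infty$. The only genuinely delicate issue is bookkeeping: the map $(l,c)\mapsto l-c$ need not be injective, so the argument should be phrased at the level of membership — every $h\in H_L$ is tested against $f$ through \emph{some} representation $h=l-c$, and for that representation $h\le f$ if and only if $f^*(l)\le c$ — rather than by claiming a bijection between $L\times\R$ and $H_L$. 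I do not anticipate any real obstacle beyond this careful reading of the two ambient sets; the statement is simply the abstract analogue of the classical identity expressing that the epigraph of the conjugate records the abstract affine minorants of $f$.
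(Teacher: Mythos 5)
Your chain of equivalences $(l,c)\in\epi_L f^* \iff f^*(l)\le c \iff l(x)-c\le f(x)\ \forall x \iff h_{l,c}\in\supp{H_L}{f}$ is correct, and your handling of the side issues (the value $+\infty$, the non-injectivity of $(l,c)\mapsto l-c$, and the implicit identification of $L\times\R$ with $H_L$) is exactly the right level of care. The paper itself states this proposition without proof, citing~\cite{jeyakumar.ea:2007}; your definition-unwinding argument is the standard one for that result, so there is nothing further to reconcile.
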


%

\section{Abstract Subdifferential Calculus}\label{sec:calculus}

In this section, we develop a calculus for the abstract subdifferential, in particular, the sum and composition rules. 

  \begin{theorem}
    Let $L_1\in \F{X}$ and $L_2\in \F{X}$ be two families of abstract linear functions defined on $X$, such that $L_1\subset L_2$. Then for any $x\in X$, 
       \[ \partial_{L_2} f(x)\cap L_1 = \partial_{L_1} f(x)\] 
  \end{theorem}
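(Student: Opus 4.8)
The plan is to argue directly from the definition of the abstract subdifferential, since no separation or conjugacy machinery is needed here. Recall that for any family $L$ of abstract linear functions, $\partial_L f(x) = \{l \in L : f(y) \geq f(x) + l(y) - l(x) \text{ for all } y \in X\}$. The key observation is that the defining inequality $f(y) \geq f(x) + l(y) - l(x)$ for all $y \in X$ is a condition on the function $l$ alone and does not reference the ambient family; membership in $L$ only serves to restrict which functions $l$ are eligible.

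First I would fix $x \in X$ and introduce the predicate $P(l)$ standing for the statement ``$f(y) \geq f(x) + l(y) - l(x)$ for all $y \in X$''. Then by definition $\partial_{L_i} f(x) = \{l \in L_i : P(l)\}$ for $i = 1, 2$. Next I would compute
\[
\partial_{L_2} f(x) \cap L_1 = \{l \in L_2 : P(l)\} \cap L_1 = \{l : l \in L_2,\ l \in L_1,\ P(l)\}.
\]
Using the hypothesis $L_1 \subset L_2$, the conjunction $l \in L_2 \wedge l \in L_1$ simplifies to $l \in L_1$, so the right-hand side equals $\{l \in L_1 : P(l)\} = \partial_{L_1} f(x)$, which completes the argument.

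There is essentially no obstacle: the statement is a purely set-theoretic consequence of unravelling the definition together with the inclusion $L_1 \subset L_2$. The only point worth stating carefully is that the subdifferential inequality is intrinsic to $l$, so that enlarging or shrinking the family cannot change whether a given $l$ satisfies it — it can only change whether $l$ is in scope. If desired, one could also phrase the two inclusions separately ($\partial_{L_1} f(x) \subseteq \partial_{L_2} f(x) \cap L_1$ because any $l \in L_1$ satisfying $P(l)$ lies in $L_2$ by inclusion and trivially in $L_1$; conversely any $l \in \partial_{L_2} f(x) \cap L_1$ lies in $L_1$ and satisfies $P(l)$), but the one-line equality of sets is cleaner.
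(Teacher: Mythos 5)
Your argument is correct and is essentially the paper's own proof: both rest on the observation that the subdifferential inequality depends only on $l$, so that $\partial_{L_2} f(x)\cap L_1=\{l\in L_1: f(y)\geq f(x)+l(y)-l(x)\ \forall y\in X\}=\partial_{L_1} f(x)$ once $L_1\subset L_2$ is used; the paper merely phrases it as two inclusions where you give a one-line set identity. No gap.
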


  \begin{proof}
    Suppose that \(l\in L_1\cap \partial_{L_2}\partial f(x)\). Then for any $y\in X$, $f(y)\geq f(x) - l(x)+ l(y)$ and therefore $l\in \partial_{L_1}f(x)$.

    If $l\in \partial_{L_1}f(x)$, it follows that $l\in L_1$ and that $l\in \partial_{L_2}f(x)$.
  \end{proof}

\begin{lemma}
  Let $f$ be a $L$-convex function, and let $L_x$ be defined as in Equation~\eqref{eq:Lx}. Then, $\partial_{L_x} f(x)$  is a $L_x$-convex set.
\end{lemma}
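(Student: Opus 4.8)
The plan is to recognise that $\partial_{L_x} f(x)$ is, up to a harmless vertical shift, simply an $L_x$-support set, and then to invoke the remark made right after Definition~2 (the Support Set definition) that the $H$-support set of an \emph{arbitrary} function is $H$-convex. So the whole argument reduces to a short identity, followed by a citation.

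The key point is the normalisation built into $L_x$ via~\eqref{eq:Lx}: every $\ell \in L_x$ satisfies $\ell(x)=0$, because $\ell = l - l(x)$ for some $l\in L$ and hence $\ell(x)=l(x)-l(x)=0$. Substituting $\ell(x)=0$ into the definition of the $L_x$-subdifferential, we get, for $\ell\in L_x$,
\[
  \ell\in\partial_{L_x}f(x)\ \iff\ f(y)\ge f(x)+\ell(y)\ \text{ for all }y\in X\ \iff\ \ell(y)\le (f-f(x))(y)\ \text{ for all }y\in X,
\]
that is, $\ell\le f-f(x)$. Therefore $\partial_{L_x}f(x)=\supp{L_x}{f-f(x)}$. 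Since $f$ is $L$-convex, $f(x)\in\R$ (the degenerate case $f(x)=+\infty$ forces $f\equiv+\infty$ and then $\partial_{L_x}f(x)=L_x$, which is trivially $L_x$-convex), so $f-f(x)$ is again a member of $\F$; being the $L_x$-support set of a function of $\F$, the set $\partial_{L_x}f(x)$ is $L_x$-convex, which is exactly the claim.

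There is no genuine obstacle here: the only thing to notice is the reduction to a support set, and the substance of that reduction is the identity $\ell(x)=0$ valid on $L_x$ — this is precisely the reason $L_x$ (rather than $L$ itself) appears in the statement. One could alternatively derive the result by combining Proposition~\ref{normalsubdif} (which identifies $\partial_L f(x)$ with a normal set in $X\times\R$) with the convexity of normal sets in Proposition~\ref{prop:convexityofsets}, but this requires the epigraph-space version of the normal set and is heavier machinery than the self-contained support-set argument above; I would present the support-set proof and perhaps mention the normal-set route in passing.
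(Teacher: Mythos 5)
Your proof is correct, and it takes a genuinely more direct route than the paper's. You observe that every $\ell\in L_x$ vanishes at $x$, so the subgradient inequality collapses to $\ell\le f-f(x)$, giving the exact identity $\partial_{L_x}f(x)=\supp{L_x}{f-f(x)}$; convexity then follows from the remark (after the Support Set definition) that the $H$-support set of any function is $H$-convex, which itself is immediate from $\supp{H}{g}=\supp{H}{\co_H g}$. The paper instead proves the inclusion $\co_{L_x}\partial_{L_x}f(x)\subset\partial_{L_x}f(x)$: it forms the envelope $h=\sup_{l\in\partial_{L_x}f(x)}l$, notes that any $u$ in the hull lies in $\supp{L_x}{h}$ (Proposition~\ref{prop:convexhullsup}), and chains the subgradient inequalities to obtain $u(y)-u(x)\le f(y)-f(x)$. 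The substance is the same in both arguments -- the normalisation $\ell(x)=0$ and the bound $\ell\le f-f(x)$ -- but your version is shorter and yields the stronger conclusion that the subdifferential \emph{is} a support set (of $f-f(x)$, equivalently of its $L_x$-convex hull), whereas the paper's version works entirely inside the definition of the hull and does not need to appeal to the unproved remark about support sets of arbitrary (not necessarily $H$-convex) functions.

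One small correction to your degenerate case: $f(x)=+\infty$ does \emph{not} force $f\equiv+\infty$ (an $L$-convex function can be $+\infty$ at some points and finite at others, e.g.\ abstract convex indicator-type functions). What is true is that if $f(x)=+\infty$ then $\partial_{L_x}f(x)$ is either all of $L_x$ (when $f\equiv+\infty$) or empty (otherwise), and both sets are trivially $L_x$-convex, so the lemma still holds; note the paper's own proof also silently assumes $f(x)$ finite, so this slip does not affect the comparison.
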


\begin{proof}
  It suffices to prove that $\co_{L_x} \partial_{L_x} f(x) \subset \partial_{L_x} f(x)$, since the reverse inclusion is clear. Let $h = \sup_{l\in \partial_{L_x} f(x)} l$ and $u\in \co_{L_x} \partial_{L_x} f(x)$. Since $\partial_{L_x} f(x)\subset L_x$, the function $h$ is $L_x$-convex, and by definition of $\co_{L_x}$, the function $u$ is in the $L_x$-support set of $h$. Then for any $y\in X$,
  \begin{align*}
    u(y) - u(x) &\leq h(y) = \sup_{l\in \partial_{L_x} f(x)} l(y) \\
    &\leq  \sup_{l\in \partial_{L_x} f(x)} f(y) - f(x) + l(x) \\
    &= f(y) - f(x)
  \end{align*}
  from which we conclude that $u\in \partial_{L_x} f(x)$.
\end{proof}

\begin{remark}
$\partial_L f(x) = \{l\in L: l-l(x) \in \partial_{L_x} f(x)\}$. Indeed,
\begin{align*}
    \partial_L f(x) &= \{l\in L: \forall y\in X, f(y) \geq f(x) + l(y)-l(x)\}\\
   &= \{l\in L: \forall y\in X, f(y) \geq f(x) + l_x(y)-l_x(x)\}\\ 
   &= \{l\in L: l-l(x) \in \partial_{L_x} f(x)\}
\end{align*}
\end{remark}

\begin{remark}\label{rmk:diffconvexlinear}
  Let $f\in \F{X}$ be a function. We can develop simple rules to calculate the $L$-subdifferential of the function obtained from $f$ through horizontal or vertical shift:
  \begin{description}
  \item[Vertical shift:] For a function $u:X\rightarrow \R$, the $(L-u)$-subdifferential of the function $f_u = f-u$ at a point $x\in X$ is $\partial_{L-u} f_u(x) = \partial_L f(x) - u$. Furthermore $f_u$ is $(L-u)$-convex if and only if $f$ is $L$-convex.

    Indeed, \begin{align*}
    \partial_{L-u} f_u(x) &= \{l_u\in L-u: \forall z\in X f_u(z)\geq f_u(x) + l_u(z) - l_u(x)\} \\
                     &= \{\begin{multlined}[t]l-u: \forall z\in X, l\in L,\\f(z) - u(z)\geq f(x) -u(x) + l(z) - u(z) -l(x) + u(x) \}\end{multlined}\\
                     &= \{l-u:l\in\partial_L f(x)\}
    \end{align*}

  \item[Subdifferential vanishing at a point $y$:] For $y\in X$, the $L_y$-subdifferential of the function $f$ at a point $x\in X$ is $\partial_{L_y}f(x) = \{u-u(y): u\in \partial_L f(x)\}$.

    Indeed,
    \begin{align*}
    \partial_{L_y} f(x) &= \{l\in L_y: \forall z\in X: f(z)\geq f(x) + l(z) - l(x)\} \\
                       &= \{\begin{multlined}[t]l-l(y): l\in L, \forall z\in X:\\f(z)\geq f(x) + l(z) - l(y) - l(x) + l(y)\}\end{multlined} \\
                       &= \{l-l(y): l\in \partial_L f(x)\}
    \end{align*}
  \item[Horizontal shift:] For $y\in X$, define $L^y = \{x\mapsto u(x+y): u\in L\}$ and $X^y = \{x-y: x\in X\}$. At $x\in X^y$, the $L^y$ subdifferential of the function $f^y(x) \coloneqq f(x+y)$ is $\partial_{L^y}f^y(x) = \{x\mapsto u(x+y): u\in \partial_L f(x+y)\}$. Furthermore $f^y$ is $(L^y,X^y)$-convex if and only if $f$ is $(L,X)$-convex.

  Indeed,
    \begin{align*}
    \partial_{L^y} f^y(x)&=\{u\in L^y:f^y(z)\geq f^y(x)+u(y)-u(x), \forall z\in X^y \}\\
    &=\{\begin{multlined}[t]u(\cdot +y): u\in L,\\f(z+y)\geq f(x+y)+u(z+y)-u(x+y), \forall z\in X^y\}\end{multlined}\\ 
    &=\{u(\cdot+y), u\in L:f(t)\geq f(x+y)+u(t)-u(x+y), \forall t\in X\}\\
    &= \{u(\cdot+y):u\in\partial_Lf(x+y)\}
    \end{align*}
  \end{description}
\end{remark}

\begin{proposition}\label{prop:maximumrule}
  Let $G\subset L$ be a finite subset of $L$ and let $f=\max_{g\in G} g$. Define $A(x) = \{g\in G: g(x) = f(x)\}$. Then we have:

  \begin{equation}\label{eq:maxrule}\partial_{L_x} f(x) \supset \co_{L_x} A(x).\end{equation}
\end{proposition}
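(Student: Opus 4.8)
The plan is to rewrite both sides of \eqref{eq:maxrule} as $L_x$-support sets and then invoke the (trivial) monotonicity of the support-set operation. Throughout, I read $\co_{L_x} A(x)$ as the $L_x$-convex hull of the translated family $\{g-g(x):g\in A(x)\}\subset L_x$, consistent with the way $\partial_{L_x}$ and $N_{L_x}$ are handled earlier in the paper.

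First I would observe that every $l\in L_x$ satisfies $l(x)=0$, so the inequality $f(y)\ge f(x)+l(y)-l(x)$ defining the $L_x$-subdifferential reduces to $l(y)\le f(y)-f(x)$ for all $y\in X$; that is, $\partial_{L_x}f(x)=\supp{L_x}{f-f(x)}$. Here $f-f(x)$ is a legitimate member of $\F$ because $f=\max_{g\in G}g$ is finite-valued, $G$ being a finite set of finite functions. Next I would describe the left-hand side: since $g(x)=f(x)$ for every $g\in A(x)$, the translate $g-g(x)$ equals $g-f(x)$, so $\sup_{g\in A(x)}(g-g(x))=\bigl(\max_{g\in A(x)}g\bigr)-f(x)$, and Proposition~\ref{prop:convexhullsup} (applied with $H=L_x$) gives $\co_{L_x}A(x)=\supp{L_x}{\bigl(\max_{g\in A(x)}g\bigr)-f(x)}$.

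The last step is the comparison. Since $A(x)\subset G$, we have $\max_{g\in A(x)}g\le\max_{g\in G}g=f$ pointwise, hence $\bigl(\max_{g\in A(x)}g\bigr)-f(x)\le f-f(x)$; and any $u$ lying below the smaller function lies below the larger one, so the two support sets are nested. Chaining these relations yields $\co_{L_x}A(x)\subset\partial_{L_x}f(x)$, which is exactly \eqref{eq:maxrule}.

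I do not anticipate a genuine obstacle: the only points requiring care are the normalisation bookkeeping --- moving each active function into $L_x$ by subtracting its value at $x$ before forming the hull --- and checking that the functions involved are admissible members of $\F$. I would also add a remark that, unlike the classical maximum rule, this inclusion is in general strict: reversing it would require that every $L_x$-linear minorant of $f-f(x)$ be dominated by $\max_{g\in A(x)}g-f(x)$, a separation-type property that can fail in the abstract setting, as the counterexample in Section~\ref{sec:preliminaries} shows.
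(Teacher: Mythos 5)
Your proof is correct and follows essentially the same route as the paper's: both identify $\co_{L_x}A(x)$ with the $L_x$-support set of $\max_{g\in A(x)}g-f(x)$ and conclude from $\max_{g\in A(x)}g\le f$, the paper chaining the inequalities pointwise for a generic element of the hull while you phrase the same comparison as monotonicity of the support-set operation. One small correction to your closing remark: the counterexample showing the inclusion can be strict is the example immediately following this proposition in Section~\ref{sec:calculus}, not the separation counterexample of Section~\ref{sec:preliminaries}.
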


\begin{proof}
  Let $h = \max_{g\in A(x)} (g-g(x))$, and let $U=\supp{L_x}{h} = \co_{L_x} \{g-g(x): g\in A(x)\}$. By definition of $L_x$ (see Formula~\eqref{eq:Lx}), we know that $h(x)=0$. Let $l\in L$ be such that $l_x = l-l(x) \in U$. Then, we have for any $y\in X$:

  \begin{align*}
  l(y) - l(x) &\leq h(y)\\
                  &\leq \sup_{g\in A(x)} g(y) - g(x)\\
                  &= \sup_{g\in A(x)} g(y) - f(x)\\
  f(x) + l(y) - l(x) &= \sup_{g\in A(x)} g(y) = f(y)
  \end{align*}

  And therefore $l_x\in \partial_{L_x} f(x)$.
\end{proof}

The reverse inclusion in Proposition~\ref{prop:maximumrule} is generally not true, as shown by the following example.
\begin{example}
Let $L$ be the set of all monotonous functions over $\R$ vanishing at 0. Consider the $L$-convex function $\max(g_1(x),g_2(x))$, where $g_1(x)=x$ and $g_2(x)=-x$ at $x=1$. At this point, $A(1) = \{g_1\}$. The function $u(x) = \max(0,x)-1$ is an $L_x$-subgradient of $f$ at $x=1$ (See Figure~\ref{fig:counterExampleMax}), since for any $y\in \R$, we have $f(y) = \abs{y} \geq 1 + (\max(0,y) -1) - 0 =f(x) + u(y)-u(x)$.  However, $u$ does not lie in $\co_{L_x} (t\mapsto g_1(t)-t)$, since for $y=-1$, we have $g_1(y) - 1 = -1-1 < 1 -1 -1 +0 = g_1(x) + u(-1) - u(1)$. Therefore $\max(0,x)\in \partial_{L_1} f(1)\setminus \partial_{L_1} g_1(1)$.

\begin{figure}
  \begin{center}
    \begin{tikzpicture}
      \begin{axis}[axis lines=center,ytick={},xtick={1}]
      \addplot[thin,dashed,domain=-2:0] {x};
      \addplot[thin,dashed,domain=0:2] {-x};
      \addplot[domain=-2:2] {abs(x)};\addlegendentry{$\max(g_1,g_2)$};
      \addplot[very thick,domain=-2:0] {-1};
      \addplot[very thick,domain=0:2] {x-1};\addlegendentry{$u$};
      \end{axis}
    \end{tikzpicture}
  \end{center}
  \caption{The function $u$ is a $L_1$-subgradient of the function $|x|$ at $x=1$.}\label{fig:counterExampleMax}
\end{figure}
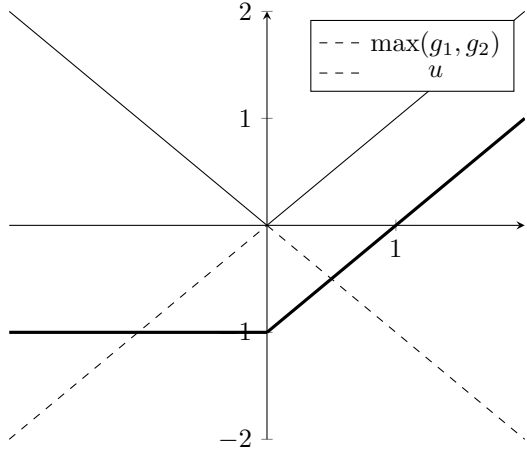
\end{example}

Consider two arbitrary sets $X$ and $Y$. For a set $G\subset \F{X}$ and a function $u: X\to Y$, we define $G\circ u = \{g\circ u: g\in G\}$. It is clear that $G\circ u\subset \F{Y}$.

\begin{proposition}[Composition Rule]\label{prop:composition}
  Let $f$ be an $(L,X)$-convex function, and $u:Y\to X$. Then
  \[ (\partial_L f(x)) \circ u \subset \partial_{L\circ u} (f\circ u) (x), \]
  where $L\circ u = \{l\circ u: l\in L\}$.

  Furthermore, if $u$ is onto the domain of $f$, then we have equality:
  \[ (\partial_L f(x)) \circ u = \partial_{L\circ u} (f\circ u) (x), \]
\end{proposition}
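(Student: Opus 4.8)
The plan is to prove the inclusion first and then add the surjectivity argument to upgrade it to an equality. For the inclusion $(\partial_L f(x))\circ u \subset \partial_{L\circ u}(f\circ u)(x)$, I would take $l\in\partial_L f(x)$ and show $l\circ u\in\partial_{L\circ u}(f\circ u)(x)$. Unwinding the definition of the abstract subdifferential, $l\in\partial_L f(x)$ means $f(z)\geq f(x)+l(z)-l(x)$ for all $z\in X$. Specialising $z=u(y)$ for an arbitrary $y\in Y$ gives $f(u(y))\geq f(x)+l(u(y))-l(x)$, i.e. $(f\circ u)(y)\geq (f\circ u)(x)+(l\circ u)(y)-(l\circ u)(x)$, which is exactly the statement $l\circ u\in\partial_{L\circ u}(f\circ u)(x)$. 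Note this direction does not use any hypothesis on $u$ and does not even use $L$-convexity of $f$; it is a pure substitution argument.

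For the reverse inclusion under the assumption that $u$ is onto $\dom f$, I would start from an element of $\partial_{L\circ u}(f\circ u)(x)$. Every element of $L\circ u$ is of the form $l\circ u$ for some $l\in L$ (though possibly not uniquely), so suppose $l\circ u\in\partial_{L\circ u}(f\circ u)(x)$, meaning $(f\circ u)(y)\geq (f\circ u)(x)+(l\circ u)(y)-(l\circ u)(x)$ for all $y\in Y$. I want to conclude $l\in\partial_L f(x)$, i.e. $f(z)\geq f(x)+l(z)-l(x)$ for all $z\in X$. If $z\in\dom f$ then by surjectivity there is $y\in Y$ with $u(y)=z$, and the hypothesis at that $y$ gives exactly the desired inequality at $z$. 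If $z\notin\dom f$ then $f(z)=+\infty$ and the inequality holds trivially. Hence $l\in\partial_L f(x)$, so $l\circ u\in(\partial_L f(x))\circ u$.

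The one point that needs care — and the main obstacle — is the interpretation of the statement when the representation $l\circ u$ is not injective in $l$: the set $L\circ u$ is defined as $\{l\circ u:l\in L\}$, and an element $v\in L\circ u$ may arise from several $l$'s, only some of which lie in $\partial_L f(x)$. Surjectivity of $u$ onto $\dom f$ resolves this precisely: if $u(Y)\supset\dom f$, then for $l,l'\in L$ with $l\circ u=l'\circ u$ we have $l=l'$ on $\dom f$, and since the subdifferential condition $f(z)\geq f(x)+l(z)-l(x)$ only ever tests $z$ where $f(z)<+\infty$ (together with requiring $x\in\dom f$, which is implicit when $\partial_L f(x)\neq\emptyset$), membership in $\partial_L f(x)$ depends on $l$ only through its restriction to $\dom f$. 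So the identification is well-posed and the two displayed sets genuinely coincide. I would state this observation explicitly before concluding, to make clear that the equality is an equality of subsets of $L\circ u$, with $v=l\circ u\in\partial_{L\circ u}(f\circ u)(x)$ iff $v=l\circ u$ for some $l\in\partial_L f(x)$. Without surjectivity this can fail, since $l$ could behave badly outside $u(Y)$ while still giving a valid subgradient of $f\circ u$; a one-line remark or a small example could be added, but is not needed for the proof itself.
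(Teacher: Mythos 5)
Your proof is correct and takes essentially the same route as the paper's: the forward inclusion is the same pure substitution argument, and the reverse inclusion uses surjectivity of $u$ onto $\dom f$ exactly as the paper does (with points outside $\dom f$ handled trivially via $f=+\infty$). Your additional discussion of the non-uniqueness of the representation $l\circ u$ is sound but not strictly needed, since under the surjectivity hypothesis any representative of $w\in\partial_{L\circ u}(f\circ u)(x)$ is shown to lie in $\partial_L f(u(x))$, which already yields the set equality.
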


\begin{proof}
  Let $w\in \partial_L f(u(x)) \circ u$, that is, there exists $v\in \partial_L f(u(x))$ such that $w=v\circ u$. Then, we have, for any $y\in X$:
  \[f(y)\geq f(u(x)) + v(u(y)) - v(u(x)).\]
  This is true, in particular, when $y=u(z)$ for some $z\in Y$. In other words, for any $z\in Y$,
  \[f(u(z))\geq f(u(x)) + v(u(y)) - v(u(x))\]
  and therefore $w\in \partial_{L\circ u} f\circ u(x)$.

  When the image of $u$ $\im u = \dom f$, and $w\in \partial_{L\circ u} f\circ u(x)$, letting $w=v\circ u$, then for any $y\in \dom f$, there exists $z\in Y$ such that $y=u(z)$. Then:
  \[f(y) = f(u(z))\geq f(u(x)) + v(u(y)) - v(u(x)).\]
  So, $v\in \partial_L f(u(x))$, which proves the reverse inclusion.
\end{proof}

We now turn our attention to the sum of abstract convex functions.
  We start by recalling the definition of infimal convolution.
  \begin{definition}\label{def:infimal}[Infimal Convolution\cite{jeyakumar.ea:2007}]
    Let $f\in \F{X_1}$ and $g\in \F{X_2}$ be real-valued functions, where $X_1, X_2$ are sets such that the Minkowski sum $X_1+X_2:=\{x+y, x\in X_1,y\in X_2\}$ is well defined. The infimal convolution $f\oplus g:(X_1+X_2)\to \R$ is defined as:
    \[
    (f\oplus g)(x) = \inf_{\substack{x_1+x_2=x\\x_1\in X_1\\x_2\in X_2}}\big( f(x_1) +g(x_2)\big).
    \]
    The convolution is said to be \emph{exact} when the infimum is attained for every $x\in X$.
  \end{definition}
  Note that unlike in most definitions, we do not assume $f$ and $g$ to be defined on the same domain. This is so we can apply the infimal convolution to the conjugates of $L_1$- and $L_2$-convex functions for arbitrary families of functions $L_1$ and $L_2$.

  We now give some results on the sums of abstract convex sets and abstract convex functions. In the rest of this section we let $X$ be an arbitrary set, $f_1\in \F{X}$ and $f_2\in\F{X}$ be two functions defined on $X$, and $L_1$ and $L_2$ any two families of functions on $X$.

  \begin{proposition}\label{prop:sumconvexfunctions}
    If $f_1$ is $L_1$-convex and $f_2$ is $L_2$-convex, then $f_1+f_2$ is $L_1+L_2$-convex.
  \end{proposition}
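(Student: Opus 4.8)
The plan is to work directly from the definition of $H$-convexity applied to the relevant function classes. Recall that a function is $L_1+L_2$-convex precisely when it equals the supremum of the members of $L_1+L_2$ that lie below it, where $L_1+L_2 = \{l_1+l_2 : l_1\in L_1,\, l_2\in L_2\}$. So the goal is to show that for every $x\in X$ and every $d<(f_1+f_2)(x)$, there exists $l_1\in L_1$ and $l_2\in L_2$ with $l_1+l_2\leq f_1+f_2$ pointwise and $(l_1+l_2)(x)>d$.

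First I would fix $x\in X$ and a scalar $d<f_1(x)+f_2(x)$. The natural idea is to "split" the gap: choose scalars $d_1<f_1(x)$ and $d_2<f_2(x)$ with $d_1+d_2 = d$ (or with $d_1+d_2>d$, which suffices and avoids any issue when one of the $f_i(x)$ is $+\infty$). Since $f_1$ is $L_1$-convex, by the definition of $H$-convexity there is $l_1\in \supp{L_1}{f_1}$ with $l_1(x)>d_1$; similarly there is $l_2\in \supp{L_2}{f_2}$ with $l_2(x)>d_2$. Then $l_1+l_2\in L_1+L_2$, and for every $y\in X$ we have $(l_1+l_2)(y) = l_1(y)+l_2(y)\leq f_1(y)+f_2(y)$, so $l_1+l_2\in \supp{L_1+L_2}{f_1+f_2}$. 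Moreover $(l_1+l_2)(x) = l_1(x)+l_2(x) > d_1+d_2 \geq d$. This exhibits a member of the support set of $f_1+f_2$ in $L_1+L_2$ exceeding $d$ at $x$, and since $x$ and $d$ were arbitrary, $\co_{L_1+L_2}(f_1+f_2)(x) \geq (f_1+f_2)(x)$ for all $x$; the reverse inequality $\co_{L_1+L_2}(f_1+f_2)\leq f_1+f_2$ is immediate from the definition of the $H$-convex hull. Hence $f_1+f_2$ is $L_1+L_2$-convex.

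The only genuinely delicate point is handling the extended-real-valued case: if $f_1(x)=+\infty$ or $f_2(x)=+\infty$, one cannot literally write $d_1+d_2=d$ with both $d_i$ finite and below $f_i(x)$ in a symmetric way, so I would instead just require $d_1<f_1(x)$, $d_2<f_2(x)$, and $d_1+d_2>d$, which is always arrangeable when $d<f_1(x)+f_2(x)$ in the extended-real arithmetic, and which is all the argument needs. A secondary bookkeeping issue is the degenerate case where the support set $\supp{L_1}{f_1}$ (or the other one) fails to "reach up to" $f_1(x)$ at $x$ — but this cannot happen precisely because $f_1$ is assumed $L_1$-convex, so $\sup\{l_1(x) : l_1\in\supp{L_1}{f_1}\} = f_1(x)$ by Definition~\ref{def:convexhullfunction} and the remark following it. Neither of these requires any separation theorem, which is the point: the sum rule for $H$-convexity of functions is elementary, in contrast to the subdifferential sum rule.

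I do not expect any real obstacle here; the main thing to be careful about is stating the gap-splitting cleanly so the proof is valid uniformly over finite and infinite values of $f_i(x)$.
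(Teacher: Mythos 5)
Your proof is correct and follows essentially the same route as the paper: the paper simply writes $(f_1+f_2)(x)$ as the sum of the suprema over the two support sets and identifies it with the supremum of $\{u_1+u_2 : u_1\in\supp{L_1}{f_1},\, u_2\in\supp{L_2}{f_2}\}\subset L_1+L_2$, which is exactly the identity your gap-splitting argument establishes pointwise. Your version is just a more explicit unpacking of that supremum identity, with added (and harmless) care about extended-real values.
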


  \begin{proof}
    Let $U_i = \supp{L_i}{f_i}$ for $i\in \{1,2\}$.
     We have that \begin{align*}
     (f_1+f_2)(x) &= \sup \{u_1(x): x\in U_1\} + \sup \{u_2(x): x\in U_2 \}\\
          &= \sup \{ (u_1(x) + u_2(x) : u_1\in U_1, u_2\in U_2\}.\qedhere
     \end{align*}
  \end{proof}

  \begin{proposition}\label{prop:convexhullsumsets} Let $f_1$, $f_2$, $L_1$ and $L_2$ be as defined above. Then, 
  \[\supp{L_1+L_2}{f_1+f_2} = \co_{L_1+L_2}\Big( \supp{L_1}{f_1}+\supp{L_2}{f_2}\Big)\].
  \end{proposition}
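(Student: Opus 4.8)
The plan is to prove the two inclusions separately, using the characterisation of support sets in terms of conjugates (Proposition~\ref{prop:epiconjugate} and Proposition~\ref{prop:moreau}) together with the basic support-set algebra already established. Throughout write $U_i = \supp{L_i}{f_i}$ for $i\in\{1,2\}$, so that $f_i = \sup U_i$ (each $f_i$ is $L_i$-convex), and recall from Proposition~\ref{prop:sumconvexfunctions} that $f_1+f_2$ is $(L_1+L_2)$-convex.

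For the inclusion $\supseteq$, I would argue directly. Take $v = v_1 + v_2$ with $v_i\in U_i$, i.e.\ $v_i\le f_i$ pointwise; then $v\le f_1+f_2$, so $v\in\supp{L_1+L_2}{f_1+f_2}$. Hence $U_1+U_2\subset \supp{L_1+L_2}{f_1+f_2}$, and since the right-hand side is an $(L_1+L_2)$-convex set (being the support set of a function), it contains the smallest such set containing $U_1+U_2$, which by Definition~\ref{def:convexhullset} is $\co_{L_1+L_2}(U_1+U_2)$. This direction is routine.

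For the inclusion $\subseteq$, take $w\in L_1+L_2$ with $w\le f_1+f_2$; I need to show $w\in\co_{L_1+L_2}(U_1+U_2)$. By Proposition~\ref{prop:convexhullsup}, $\co_{L_1+L_2}(U_1+U_2) = \supp{L_1+L_2}{\sup_{l\in U_1+U_2} l}$, and $\sup_{l\in U_1+U_2} l = \sup_{u_1\in U_1,\,u_2\in U_2}(u_1+u_2) = f_1+f_2$ by the computation in the proof of Proposition~\ref{prop:sumconvexfunctions}. But $\supp{L_1+L_2}{f_1+f_2}$ is exactly the left-hand side. So in fact both sides equal $\supp{L_1+L_2}{f_1+f_2}$, and the statement reduces to Proposition~\ref{prop:convexhullsup} applied to the set $C = U_1+U_2\subset L_1+L_2$, once we have checked $\sup(U_1+U_2) = f_1+f_2$.

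The only genuine point to verify — and the place where the argument could slip — is that $\sup_{u_1\in U_1,\,u_2\in U_2}\big(u_1(x)+u_2(x)\big) = f_1(x)+f_2(x)$ for every $x$, i.e.\ that the pointwise supremum of the sum set $U_1+U_2$ recovers $f_1+f_2$; this is precisely the separation-of-suprema identity $\sup_a a + \sup_b b = \sup_{a,b}(a+b)$ used in Proposition~\ref{prop:sumconvexfunctions}, valid since $f_1,f_2$ are $L_1$- resp.\ $L_2$-convex (so $U_i\neq\emptyset$ and the sets are non-degenerate). Given this, the whole proposition collapses to an application of Proposition~\ref{prop:convexhullsup} with $C = U_1+U_2$, and no separation theorem is needed. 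I would therefore structure the write-up as: (i) note $U_1+U_2\subset L_1+L_2$; (ii) compute $\sup(U_1+U_2) = f_1+f_2$; (iii) invoke Proposition~\ref{prop:convexhullsup} to conclude $\co_{L_1+L_2}(U_1+U_2) = \supp{L_1+L_2}{f_1+f_2}$, which is the claim.
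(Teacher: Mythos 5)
Your proposal is correct and follows essentially the same route as the paper: both arguments reduce to checking that $\sup_{l\in \supp{L_1}{f_1}+\supp{L_2}{f_2}} l = f_1+f_2$ (the separation-of-suprema identity, valid because each $f_i$ is $L_i$-convex) and then invoking Proposition~\ref{prop:convexhullsup} with $C=\supp{L_1}{f_1}+\supp{L_2}{f_2}\subset L_1+L_2$. The extra direct argument you give for the inclusion $\supseteq$ is fine but, as you note yourself, redundant once Proposition~\ref{prop:convexhullsup} is applied.
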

  \begin{proof}
    We have
    \begin{align*}
      \co_{L_1+L_2}(f_1+f_2)(x) &= \co_{L_1+L_2}\left(\sup_{l_1\in \supp{L_1}{f_1}} l_1(x) +\sup_{l_2\in \supp{L_2}{f_2}} l_2(x)\right)\\
      \sup_{l\in \supp{L_1+L_2}{f_1+f_2}} l(x)&= \co_{L_1+L_2}\sup_{l_1+l_2\in \supp{L_1}{f_1}+\supp{L_2}{f_2}} l_1(x) +l_2(x)\\
      &= \sup_{l_1+l_2\in \supp{L_1}{f_1}+\supp{L_2}{f_2}} l_1(x) +l_2(x)
    \end{align*}
    Applying Proposition~\ref{prop:convexhullsup} gives the result.
  \end{proof}

  Define the \emph{strict epigraph} of $f$ by $$\epi^{\mathrm{st}}_X f := \{(x,c)\in X\times \R:f(x)<c\}$$
  \begin{proposition}\label{prop:epiconvolution}
  The following statements are true.
      \begin{enumerate}
      \item $\epi^{\mathrm{st}}_{L_1+L_2}({f_1}^*_{L_1}\oplus {f_2}^*_{L_2}) = \epi^{\mathrm{st}}_{L_1} {f_1}^*_{L_1} + \epi^{\mathrm{st}}_{L_2} {f_2}^*_{L_2}$
      \item
      \begin{equation}\label{eq:sumepigraphs}
      \epi_{L_1+L_2}({f_1}^*_{L_1}\oplus {f_2}^*_{L_2}) \supseteq  \epi_{L_1} {f_1}^*_{L_1} + \epi_{L_2} {f_2}^*_{L_2}
      \end{equation}
      and equality holds if and only if the infimal convolution ${f_1}^*_{L_1}\oplus {f_2}^*_{L_2}$ is exact at every $l\in \dom ({f_1}^*_{L_1}\oplus {f_2}^*_{L_2})$.
      \item $\co_{H_{L_1+L_2}}\epi_{L_1+L_2}({f_1}^*_{L_1}\oplus {f_2}^*_{L_2}) =  \co_{H_{L_1+L_2}}\epi_{L_1} {f_1}^*_{L_1} + \epi_{L_2} {f_2}^*_{L_2}$
    \end{enumerate}

  \end{proposition}
  \begin{proof}
  \begin{enumerate}
    Suppose that $(l_1,c_1)\in \epi_{L_1}{f_1}^*_{L_1}$ and $(l_2,c_2)\in \epi_{L_2}{f_2}^*_{L_2}$. Then $({f_1}^*_{L_1}\oplus {f_2}^*_{L_2})(l_1+l_2)\leq {f_1}^*_{L_1}(l_1)+{f_2}^*_{L_2}(l_2)\leq c_1+c_2$. Therefore $(l_1+l_2,c_1+c_2)\in \epi_{L_1+L_2} ({f_1}^*_{L_1}\oplus {f_2}^*_{L_2})$ and so
      \[ \epi_{L_1}{f_1}^*_{L_1} + \epi_{L_2}{f_2}^*_{L_2} \subset \epi_{L_1+L_2} ({f_1}^*_{L_1}\oplus {f_2}^*_{L_2}). \]

      Now suppose that these two sets are equal, and let $l\in \dom ({f_1}^*_{L_1}\oplus {f_2}^*_{L_2})$. Then $(l,({f_1}^*_{L_1}\oplus {f_2}^*_{L_2})(l)) \in \epi_{L_1+L_2}({f_1}^*_{L_1}\oplus {f_2}^*_{L_2}) =  \epi_{L_1} {f_1}^*_{L_1} + \epi_{L_2} {f_2}^*_{L_2}$. Let $(l_1,c_1)\in \epi_{L_1} {f_1}^*_{L_1}$ and $(l_2,c_2)\in \epi_{L_2} {f_2}^*_{L_2}$ be such that $(l,({f_1}^*_{L_1}\oplus {f_2}^*_{L_2})(l)) = (l_1,c_1)+(l_2,c_2)$. We have:

      \begin{align*}
      \inf_{u_1+u_2=l} {f_1}^*_{L_1}(u_1)+{f_2}^*_{L_2}(u_2) &= ({f_1}^*_{L_1}\oplus {f_2}^*_{L_2})(l) \\&= c_1+c_2 \geq {f_1}^*_{L_1}(l_1)+{f_2}^*_{L_2}(l_2)\\ &\geq \inf_{u_1+u_2=l} {f_1}^*_{L_1}(u_1)+{f_2}^*_{L_2}(u_2) 
      \end{align*}
      and so the infimal convolution is attained at $(l_1,l_2)$.

      Conversely, assume that the infimal convolution is attained, and consider $(l,c)\in \epi_{L_1+L_2}({f_1}^*_{L_1}\oplus {f_2}^*_{L_2})$. There exist $l_1\in L_1$and $l_2\in L_2$ be such that $l_1+l_2=l$ and ${f_1}^*_{L_1}(l_1) + {f_2}^*_{L_2}(l_2) = ({f_1}^*_{L_1}\oplus {f_2}^*_{L_2})(l) \leq c$.

      Define $c_1 = {f_1}^*_{L_1}(l_1)$ and $c_2 = c-c_1$. It is clear that $(l_1,c_1)\in (l_1,c_1)\in \epi_{L_1} {f_1}^*_{L_1}$. Since $f_2(l_1) \leq c- f_1(l_1) = c_2$, we have $(l_2,c_2)\in \epi_{L_2} {f_2}^*_{L_2}$. Therefore, $(l,c) = (l_1,c_1)+(l_2,c_2)\in \epi_{L_1} {f_1}^*_{L_1} + \epi_{L_2} {f_2}^*_{L_2}$, which proves the result.

      \item Let
      \begin{align*}
        \lambda(x) &= \sup_{(u,\alpha)\in \epi_{L_1} {f_1}^*_{L_1} +\epi_{L_2} {f_2}^*_{L_2}} u(x)- \alpha\\
        \gamma(x) &= \sup_{(u,\alpha)\in \epi_{L_1+L_2}} ({f_1}^*_{L_1} \oplus {f_2}^*_{L_2}) u(x)- \alpha
      \end{align*}
      By~\eqref{eq:sumepigraphs}, it is clear that $\lambda\leq \gamma$. Take $(u,\alpha)\in \epi_{L_1+L_2} ({f_1}^*_{L_1} \oplus {f_2}^*_{L_2})$. We have:
      \begin{align*}
        u(x) - \alpha &\leq u(x) - \inf_{u_1+u_2=u} {f_1}^*_{L_1}(u_1) + {f_2}^*_{L_2}(u_2)\\
        &= \sup_{u_1+u_2=u}u(x)-{f_1}^*_{L_1}(u_1) - {f_2}^*_{L_2}(u_2)\\
        &\leq \sup_{\substack{u_1+u_2=u\\a\geq {f_1}^*_{L_1}(u_1)\\b\geq {f_2}^*_{L_2}(u_2)}}u(x)-a-b\\
        &\leq \sup_{\substack{(u_1,a)\in \epi_{L_1}{f_1}^*_{L_1}\\(u_2,b)\in \epi_{L_2}{f_2}^*_{L_2}}}u(x)-a-b\\
        &= \lambda(x)
      \end{align*}
      and therefore $\gamma\leq \lambda$, which proves the result.
    \end{enumerate}
  \end{proof}

  \begin{corollary}\label{cor:suminfimalconvolution}
  If the infimal convolution is exact at every $x\in \dom (f_1\oplus f_2)$ then
    $(f_1+f_2)^*_{L_1+L_2} = \co_{L_1+L_2} ({f_1}^*_{L_1}\oplus {f_2}^*_{L_2})$
  \end{corollary}
  \begin{proof}
    By Propositions~\ref{prop:epiconjugate} and~\ref{prop:epiconvolution}, we have that
      \begin{equation}
        \epi_{L_1+L_2}({f_1}^*_{L_1}\oplus {f_2}^*_{L_2}) =  \supp{H_{L_1}}{f_1} + \supp{H_{L_2}}{f_2}
 \label{eq:convexepigraph}
      \end{equation}

    Applying Proposition~\ref{prop:convexhullsumsets}, this implies that
    \begin{align*}
      \co_{H_{L_1}+H_{L_2}}\epi_{L_1+L_2}({f_1}^*_{L_1}\oplus {f_2}^*_{L_2}) &=  \supp{H_{L_1}+H_{L_2}}{f_1+f_2}\nonumber\\
      &= \epi (f_1+f_2)^*_{L_1+L_2}
    \end{align*}
    The result then follows from Corollary~\ref{cor:epigraphhull}.
  \end{proof}

  \begin{corollary}\label{cor:equivalenceconvexity}
    The following statements are equivalent:
    \begin{enumerate}[label=(\roman*)]
      \item\label{item:sumconvex} $\supp{H_{L_1}+H_{L_2}}{f_1+f_2} = \supp{H_{L_1}}{f_1}+\supp{H_{L_2}}{f_2}$
      \item\label{item:sumconjugates} $({f_1}^*_{L_1}\oplus {f_2}^*_{L_2}) = (f_1 + f_2)^*_{L_1+L+2}$ with exact infimal convolution.
      \item\label{item:sumepigraphs} $\epi_{L_1} {f_1}^*_{L_1} +\epi_{L_2}{f_2}^*_{L_2} = \epi_{L_1+L_2} (f_1+f_2)^*_{L_1+L_2}$.
    \end{enumerate}
  \end{corollary}
  \begin{proof}
    $\ref{item:sumconjugates}\implies \ref{item:sumconvex}$ follows
    directly from Equation~\eqref{eq:convexepigraph}, since the $(H_{L_1}+H_{L_2})$-convexity of the left-hand side is equivalent to the convexity of the right-hand side.

    To prove $\ref{item:sumconvex}\implies \ref{item:sumconjugates}$, take any $l_1\in L_1$, $l_2\in L_2$ and $l=l_1+l_2$. We have:
    \begin{align*}
    {f_1}^*_{L_1}(l_1)+{f_2}^*_{L_2}(l_2) &= \sup_{x\in X} \big(l_1(x) - f_1(x)\big) + \sup_{x\in X} \big(l_2(x) - f_2(x)\big)\\
    &\geq \sup_{x\in X} \big(l(x) - f_1(x) - f_2(x)\big) = (f_1+f_2)^*_{L_1+L_2}(l)
    \end{align*}
    Therefore $(f_1+f_2)^*_{L_1+L_2}(l)\leq \inf_{l_1+l_2=l}({f_1}^*_{L_1}(l_1)+{f_2}^*_{L_2}(l_2) = ({f_1}^*_{L_1}\oplus {f_2}^*_{L_2})(l)$.

    On the other hand,
    \begin{align*}
      (l,(f_1+f_2)^*_{L_1+L_2}(l))&\in \epi_{L_1+L_2} (f_1+f_2)^*_{L_1+L_2} \\
      &= \supp{H_{L_1+L_2}}{f_1+f_2} = \supp{H_{L_1}}{f_1}+\supp{H_{L_2}}{f_2}\\
     &= \epi({f_1}^*_{L_1},L_1)+\epi({f_2}^*_{L_2},L_2).
    \end{align*}
    Therefore there exist $(l_1,c_1)\in \epi_{L_1}{f_1}^*_{L_1}$ and $(l_2,c_2)\in \epi_{L_2}{f_2}^*_{L_2}$ such that $l_1+l_2=l$, $c_1+c_2=(f_1+f_2)^*_{L_1+L_2}(l)$. Then
    \begin{align*}
    (f_1+f_2)^*_{L_1+L_2}(l) = c_1+c_2 &\geq {f_1}^*_{L_1}(l_1)+ {f_2}^*_{L_2}(l_2) \\&\geq \inf_{u_1+u_2=l}{f_1}^*_{L_1}(u_1)+ {f_2}^*_{L_2}(u_2) = ({f_1}^*_{L_1} \oplus {f_2}^*_{L_2})(l)
    \end{align*}
    Thus $(f_1+f_2)^*_{L_1+L_2} = ({f_1}^*_{L_1}\oplus {f_2}^*_{L_2})$, and the infimal convolution is exact.

    That $\ref{item:sumconjugates} \iff \ref{item:sumepigraphs}$ follows from Proposition~\ref{prop:epiconvolution}.
  \end{proof}

  Note that the statements above can be interpreted as follows: the sum of abstract convex sets (as the support sets of two abstract convex functions) is abstract convex if and only if the infimal convolution of the two functions is abstract convex.
  
Define the support function of the set $C\subset X$ as $\sigma_{C}:L\rightarrow \R$, $\sigma_C(l):=\sup_{x\in C}l(x)$ and the indicator function of $C$ by $\delta_C(x)\coloneqq 0$ if $x\in C$ and $\delta_C(x)\coloneqq +\infty$ if $x\notin C$. It is easy to see that $\sigma_C = {\delta_C}^*_L$.

  \begin{corollary}
    Let $C\subset X$ and $D\subset X$ be subsets of $X$. Then, \[
    \epi_{L_1+L_2} (\sigma_{C\cap D}) = \co_{(L_1+L_2)\times \R}\big(\epi_{L_1} \sigma_C + \epi_{L_2} \sigma_D\big).
  \] 
  \end{corollary}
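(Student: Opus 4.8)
The plan is to recognise this as a direct specialisation of the sum-rule machinery developed above to the case of indicator and support functions. First I would set $f_1 = \delta_C$ and $f_2 = \delta_D$, so that $f_1$ is $\{0\}$-convex (hence $L_1$-convex whenever $0\in H_{L_1}$, but more carefully one wants $\delta_C$ to be $L_1$-convex, which holds when $C$ is an $(X,L_1)$-convex set; I would state the corollary under that hypothesis, matching how $\sigma_C = \delta_C^*$ is used). Recall from the observation just before the statement that $\sigma_C = \delta_C^*$ and $\sigma_D = \delta_D^*$, and note that $\delta_C + \delta_D = \delta_{C\cap D}$, hence $(\delta_C + \delta_D)^* = \sigma_{C\cap D}$.

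Next I would invoke Corollary~\ref{cor:suminfimumconvolution} with $f_1 = \delta_C$, $f_2 = \delta_D$, giving
\[
  \sigma_{C\cap D} = (\delta_C+\delta_D)^* = \co_{L_1+L_2}\big(\delta_C^* \oplus \delta_D^*\big) = \co_{L_1+L_2}\big(\sigma_C\oplus\sigma_D\big).
\]
Taking epigraphs over $L_1+L_2$ and using Corollary~\ref{cor:epigraphhull} (which says $\co_{(M\times\R,\,M'\times\{-1\})}\epi g = \epi\co_{M'} g$ for the appropriate $M,M'$, applied here with the abstract-linear family $L_1+L_2$ in the role of the underlying "variable space"), the right-hand side becomes $\co_{(L_1+L_2)\times\{-1\}}\epi_{L_1+L_2}(\sigma_C\oplus\sigma_D)$. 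Finally, Proposition~\ref{prop:epiconvolution} identifies $\epi_{L_1+L_2}(\sigma_C\oplus\sigma_D) = \epi_{L_1+L_2}(\delta_C^*\oplus\delta_D^*) = \epi_{L_1}\delta_C^* + \epi_{L_2}\delta_D^* = \epi_{L_1}\sigma_C + \epi_{L_2}\sigma_D$, and substituting yields exactly
\[
  \epi_{L_1+L_2}(\sigma_{C\cap D}) = \co_{(L_1+L_2)\times\R}\big(\epi_{L_1}\sigma_C + \epi_{L_2}\sigma_D\big),
\]
as claimed (writing $\co_{(L_1+L_2)\times\R}$ as shorthand for the hull with respect to the family $(L_1+L_2)\times\{-1\}$, consistent with the notation in Corollary~\ref{cor:epigraphhull} and Proposition~\ref{prop:convexityepigraph}).

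The step I expect to require the most care is the bookkeeping around which ``convexity'' the epigraph hull is taken with respect to: the conjugates $\sigma_C$, $\sigma_D$ live on the function spaces $L_1$, $L_2$, so their epigraphs are subsets of $L_1\times\R$ and $L_2\times\R$, and the relevant elementary functions for \emph{their} epigraphs are of the form $(l,\alpha)\mapsto l(\cdot)$-evaluation paired with $-1$ in the $\R$ coordinate — exactly the setup in Proposition~\ref{prop:convexityepigraph} but with $L_i$ playing the role of $X$. One must check that Corollaries~\ref{cor:epigraphhull} and~\ref{cor:suminfimumconvolution} and Proposition~\ref{prop:epiconvolution} apply verbatim in this ``dual'' instantiation; since those results were proved for arbitrary sets and arbitrary families of functions, this is legitimate, but it is worth a sentence in the write-up. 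A secondary point is ensuring $\delta_C$ is genuinely $L_1$-convex (equivalently $C$ is $(X,L_1)$-convex) so that $\co_{L_1}\delta_C = \delta_C$ and $\delta_C^{**}=\delta_C$; if $C$ is an arbitrary set the identity holds with $C$ replaced by $\co_{L_1} C$ on the left, so I would either assume $C,D$ are abstract convex or note this caveat.
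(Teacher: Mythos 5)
Your proposal follows essentially the same route as the paper's proof: apply Corollary~\ref{cor:suminfimumconvolution} to $f_1=\delta_C$, $f_2=\delta_D$ to get $\sigma_{C\cap D}=\co_{L_1+L_2}(\sigma_C\oplus\sigma_D)$, pass to epigraphs via Corollary~\ref{cor:epigraphhull}, and identify $\epi_{L_1+L_2}(\sigma_C\oplus\sigma_D)$ with $\epi_{L_1}\sigma_C+\epi_{L_2}\sigma_D$ via Proposition~\ref{prop:epiconvolution}. Your closing caveat about assuming $C$ and $D$ abstract convex is unnecessary (though harmless): the cited results hold for arbitrary functions and the argument uses only one level of conjugation, never $\delta_C^{**}=\delta_C$.
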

  \begin{proof}
    We have from Corollary~\ref{cor:suminfimalconvolution}
    \[
      \sigma_{C\cap D} = (\delta_C+\delta_D)^*_{L_1+L_2} = \co_{L_1+L_2} ({\delta_C}^*_{L_1} \oplus {\delta_D}^*_{L_2})
    \]
    Applying Corollary~\ref{cor:epigraphhull}, this implies that
    \begin{align*}
    \epi_{L_1+L_2} \sigma_{C\cap D} &= \co_{(L_1+L_2)\times \R} \epi_{L_1+L_2}({\delta_C}^*_{L_1}\oplus{\delta_D}^*_{L_2})\\
    &= \co_{(L_1+L_2)\times \R}(\epi_{L_1}{\delta_C}^*_{L_1} +\epi_{L_2}{\delta_D}^*_{L_2})
    \end{align*}
    where the latter equality follows from Proposition~\ref{prop:epiconvolution}. From there and the fact that ${\delta_C}^*_{L_1} = \sigma_C$ and ${\delta_D}^*_{L_2}=\sigma_D$ we conclude that
  \[
    \epi_{L_1+L_2} (\sigma_{C\cap D}) = \co_{(L_1+L_2)\times \R}(\epi_{L_1} \sigma_C + \epi_{L_2} \sigma_D).\qedhere
  \]
  \end{proof}

Note that the definition of the support function implies $N_{L}(x,C)=\{l\in L: \sigma_C(l)=l(x)\}$.

\begin{proposition}\label{prop:sumnormal}
Consider $C,D\subset X$ such that $C$ is $(X,L_1)$-convex and $D$ is $(X,L_2)$-convex and suppose that $\supp{H_{L_1}}{\delta_C} + \supp{H_{L_2}}{\delta_D}=\supp{H_{L_1+L_2}}{\delta_{C} + \delta_{D}}$
Then for all $x\in X$, $N_L(x,C\cap D)=N_{L_1}(x,C)+N_{L_2}(x,D)$.
\end{proposition}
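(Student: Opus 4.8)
The plan is to derive the normal-cone identity from the already-established calculus for support functions and conjugates, using the dictionary $N_L(x,C) = \{l \in L : \sigma_C(l) = l(x)\}$ together with the characterisation $\partial_L g(x) = \{l : f^*(l) + f(x) = l(x)\}$ from Proposition~\ref{prop:moreau}. The key observation is that $\delta_{C\cap D} = \delta_C + \delta_D$, so that $N_L(x, C\cap D)$ is exactly the $L$-subdifferential of $\delta_{C\cap D} = \delta_C + \delta_D$ at $x$ (this is immediate since $\partial_L \delta_C(x) = N_L(x,C)$ for $x \in C$). Thus the statement is a special case of a sum rule for the abstract subdifferential, and the hypothesis $\supp{H_{L_1}}{\delta_C} + \supp{H_{L_2}}{\delta_D} = \supp{H_{L_1+L_2}}{\delta_C + \delta_D}$ is precisely condition (1) of Corollary~\ref{cor:equivalenceconvexity} applied to $f_1 = \delta_C$, $f_2 = \delta_D$, hence equivalent to the exactness of the infimal convolution $\delta_C^* \oplus \delta_D^* = \sigma_C \oplus \sigma_D = (\delta_C+\delta_D)^* = \sigma_{C\cap D}$.

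First I would fix $x \in X$ and prove the inclusion $N_{L_1}(x,C) + N_{L_2}(x,D) \subset N_L(x, C\cap D)$, which is elementary and requires no hypothesis: if $l_1(y) - l_1(x) \le 0$ for all $y \in C$ and $l_2(y) - l_2(x) \le 0$ for all $y \in D$, then $(l_1+l_2)(y) - (l_1+l_2)(x) \le 0$ for all $y \in C \cap D$, and $l_1 + l_2 \in L_1 + L_2 = L$. (One should also handle the degenerate case $x \notin C\cap D$, where by convention both sides are empty: if $x \notin C$ then $N_{L_1}(x,C) = \emptyset$ so the left side is empty, and likewise $N_L(x,C\cap D) = \emptyset$.) For the reverse inclusion, take $l \in N_L(x, C\cap D)$, so $\sigma_{C\cap D}(l) = l(x)$, i.e. $l \in \partial_L \delta_{C\cap D}(x)$. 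By Proposition~\ref{prop:moreau} this means $\delta_{C\cap D}^*(l) + \delta_{C\cap D}(x) = l(x)$, and since $x \in C\cap D$, $\sigma_{C\cap D}(l) = l(x)$. Now $\sigma_{C\cap D} = (\delta_C + \delta_D)^*$, and the hypothesis gives, via Corollary~\ref{cor:equivalenceconvexity}(1)$\Rightarrow$(2), that $\sigma_{C\cap D}(l) = (\delta_C^* \oplus \delta_D^*)(l)$ with the infimal convolution exact at $l$. Hence there exist $l_1 \in L_1$, $l_2 \in L_2$ with $l_1 + l_2 = l$ and $\sigma_C(l_1) + \sigma_D(l_2) = \delta_C^*(l_1) + \delta_D^*(l_2) = l(x) = l_1(x) + l_2(x)$.

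The final step is to split this equality of sums into two equalities. From inequality~\eqref{eq:Moreau} applied to $\delta_C$ and $\delta_D$ we have $\sigma_C(l_1) \ge l_1(x)$ (since $x \in C$) and $\sigma_D(l_2) \ge l_2(x)$ (since $x \in D$); combined with $\sigma_C(l_1) + \sigma_D(l_2) = l_1(x) + l_2(x)$ this forces $\sigma_C(l_1) = l_1(x)$ and $\sigma_D(l_2) = l_2(x)$, i.e. $l_1 \in N_{L_1}(x,C)$ and $l_2 \in N_{L_2}(x,D)$, so $l = l_1 + l_2 \in N_{L_1}(x,C) + N_{L_2}(x,D)$, completing the proof.

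I expect the only real subtlety to be making sure the equivalence in Corollary~\ref{cor:equivalenceconvexity} is being invoked correctly: the abstract conjugates here have domain $L_1$, $L_2$, or $L_1+L_2$, and one must check that $\delta_C$ is genuinely $L_1$-convex (which holds because $C$ is assumed $(X,L_1)$-convex, and the indicator of an $(X,L_1)$-convex set is $L_1$-convex — this is essentially the content that $H_{L_1}$-support sets and sublevel sets correspond), and similarly for $\delta_D$; otherwise the hypothesis is about the support sets $\supp{H_{L_i}}{\delta_{C}}$ directly and the corollary applies verbatim. The rest is the short "equality of sums of terms each dominating a common lower bound" argument, which is routine. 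Everything else is bookkeeping with the conjugate/support-function/normal-set dictionary already set up in the Preliminaries.
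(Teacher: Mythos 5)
Your proposal is correct and follows essentially the same route as the paper: both directions are handled identically, with the hypothesis converted via Corollary~\ref{cor:equivalenceconvexity} into exactness of $\sigma_C\oplus\sigma_D=\sigma_{C\cap D}$ at $l$, yielding a decomposition $l=l_1+l_2$ with $\sigma_C(l_1)+\sigma_D(l_2)=l(x)$. Your final splitting step (two inequalities $\sigma_C(l_1)\geq l_1(x)$, $\sigma_D(l_2)\geq l_2(x)$ whose sum is an equality) is just a slightly more symmetric phrasing of the paper's chain of inequalities, so there is no substantive difference.
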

\begin{proof}
First, if $x\notin C\cap D$, that is, $x\notin C$ or $x\notin D$, then the abstract normal cones on both sides of the equality are both the empty set.

 Now consider $x\in C\cap D$. Take $l\in N_{L_1}(x,C)+N_{L_2}(x,D)$, then there exists $l_C\in N_{L_1}(x,C)$ and $l_D\in N_{L_2}(x,D)$ such that $l=l_C+l_D$. For all $y\in C$ $l_C(y)\leq l_C(x)$ and for all $z\in D$ $l_D(z)\leq l_D(x)$, then taking any point $w\in C\cap D$ we have, summing the inequalities above, $l_C(w)+l_D(w)\leq l_C(x)+l_D(x)$. So, for all $w\in C\cap D$, $l(w)\leq l(x)$, implying that $l\in N_L(x,C\cap D)$.

  On the other hand, consider $l\in N_L(x,C\cap D)$.
  We have, by assumption that $\supp{H_{L_1}}{\delta_C} + \supp{H_{L_2}}{\delta_D}=\supp{H_{L_1+L_2}}{\delta_{C} + \delta_{D}}$ and Corollary~\ref{cor:equivalenceconvexity}, that
  \[
  (\sigma_C \oplus \sigma_D)(l) = (\delta_C+\delta_D)^*_{L_1+L_2}(l) = {\delta_{C\cap D}}^*_{L_1+L_2}(l) = \sigma_{C\cap D}(l) = l(x).
  \]
  and since the infimal convolution is exact, there exist $l_1\in L_1$ and $l_2$ in $L_2$ with $l_1+l_2=l$ such that $\sigma_C(l_1)+\sigma_D(l_2) = l(x)$.

Now, for all $z\in D$,
  \[0\geq l_1(x)-\sigma_C(l_1)=(l-l_2)(x)+\sigma_D(l_2)-l(x)=\sigma_D(l_2)-l_2(x)\geq l_2(z)-l_2(x).\]
 This implies that $l_2\in N_{L_2}(x,D)$. In the same way we can prove that $l_1\in N_{L_1}(x,C)$. Then $N_L(x,C\cap D)\subseteq N_{L_1}(x,C)+N_{L_2}(x,D)$. This proves the equality. 
\end{proof}



  \begin{theorem}[Sum Rule]\label{thm:sumsubdif}
  Consider two families of functions $L_1\subset \F{X}$ and $L_2\subset \F{X}$, and two functions $f_1\in\F{X}$ and $f_2\in\F{X}$ be $L_1$-convex and $L_2$-convex respectively such that $\dom f_1 \cap \dom f_2 \neq \emptyset$.  Assume that $\supp{H_{L_1}}{f_1}+\supp{H_{L_2}}{f_2} = \supp{H_{L_1}+H_{L_2}}{f_1+f_2}$. Then for all $x\in \dom f_1 \cap \dom f_2$, we have the following:
   \[ \partial_{L_1+L_2} (f_1+f_2) = \partial_{L_1} f_1 (x) + \partial_{L_2} f_2(x).\]
  \end{theorem}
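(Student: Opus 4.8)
The plan is to prove the two inclusions separately, with the easy inclusion coming first and the conjugate machinery doing the heavy lifting for the hard one. For the inclusion $\partial_{L_1} f_1(x) + \partial_{L_2} f_2(x) \subset \partial_{L_1+L_2}(f_1+f_2)(x)$, I would take $l_1 \in \partial_{L_1} f_1(x)$ and $l_2 \in \partial_{L_2} f_2(x)$, write down the two subgradient inequalities $f_1(y) \geq f_1(x) + l_1(y) - l_1(x)$ and $f_2(y) \geq f_2(x) + l_2(y) - l_2(x)$ valid for all $y \in X$, and simply add them; since $l_1 + l_2 \in L_1 + L_2 = L$, this shows $l_1+l_2 \in \partial_L f(x)$. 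This direction needs no hypotheses beyond the definitions and is essentially the same computation as in Proposition~\ref{prop:sumnormal}.

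For the reverse inclusion, I would translate the statement into the language of normal sets to the epigraph and then invoke Proposition~\ref{prop:sumnormal}. By Proposition~\ref{normalsubdif} (or the remark preceding Proposition~\ref{normalsubdif}), $l \in \partial_L f(x)$ if and only if $(l,-1) \in N_{L\times\{-1\}}((x,f(x)), \epi f)$. The key geometric fact is that $\epi(f_1+f_2) = \{(y,c): \exists c_1, c_2, c_1 + c_2 = c, f_1(y) \leq c_1, f_2(y) \leq c_2\}$, which exhibits $\epi_X(f_1+f_2)$ as an "intersection-type" object: more precisely, $\epi(f_1+f_2)$ over $X \times \R$ relates to the epigraphs of $f_1$ and $f_2$ the way $\sigma_{C\cap D}$ relates to $\sigma_C, \sigma_D$. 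I would set up the analogue of Proposition~\ref{prop:sumnormal} at the level of epigraphs: using Proposition~\ref{prop:convexityepigraph}, the hypothesis $\supp{H_{L_1}}{f_1}+\supp{H_{L_2}}{f_2} = \supp{H_{L_1}+H_{L_2}}{f_1+f_2}$ is exactly condition~(1) of Corollary~\ref{cor:equivalenceconvexity} applied to $f_1, f_2$, hence $(f_1 + f_2)^* = f_1^* \oplus f_2^*$ with exact infimal convolution. Given $l \in \partial_L f(x)$, Proposition~\ref{prop:moreau} gives $f^*(l) + f(x) = l(x)$; exactness of the convolution then produces $l_1 \in L_1$, $l_2 \in L_2$ with $l_1 + l_2 = l$ and $f_1^*(l_1) + f_2^*(l_2) = f^*(l)$. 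Combining this with the Moreau inequality~\eqref{eq:Moreau} applied to each $f_i$ at $x$ — that is, $f_i^*(l_i) + f_i(x) - l_i(x) \geq 0$ — and the fact that these two nonnegative quantities sum to $f^*(l) + f(x) - l(x) = 0$, I conclude each must vanish: $f_i^*(l_i) + f_i(x) = l_i(x)$. By Proposition~\ref{prop:moreau} again, this says $l_i \in \partial_{L_i} f_i(x)$, and so $l = l_1 + l_2 \in \partial_{L_1}f_1(x) + \partial_{L_2}f_2(x)$.

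The main obstacle is making sure the conjugates are taken over the correct domains and that the infimal convolution $f_1^* \oplus f_2^*$, with $f_i^*$ defined on $L_i$, matches up with the conjugate of $f_1+f_2$ defined on $L_1+L_2$ — this is precisely why the paper introduced the infimal convolution without assuming a common domain. I expect the bookkeeping around $\dom f_1 \cap \dom f_2 \neq \emptyset$ (so that $f = f_1 + f_2$ is not identically $+\infty$ and $x$ lies in the common domain) and the careful application of Corollary~\ref{cor:equivalenceconvexity} to be where the argument needs the most attention; the rest is a short chain of equalities and the "sum of nonnegatives equals zero" trick.
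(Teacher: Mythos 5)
Your proof is correct, but it follows a genuinely different route from the paper's. After the same easy forward inclusion, the paper proves the reverse inclusion geometrically: it lifts the problem to $X\times\R\times\R$, introduces the sets $\Omega_1=\{(y,\lambda_1,\lambda_2):\lambda_1\geq f_1(y)\}$ and $\Omega_2=\{(y,\lambda_1,\lambda_2):\lambda_2\geq f_2(y)\}$, transfers the qualification hypothesis to the indicator functions $\delta_{\Omega_1},\delta_{\Omega_2}$, applies the normal-set sum rule (Proposition~\ref{prop:sumnormal}) to $N_L((x,f_1(x),f_2(x)),\Omega_1\cap\Omega_2)$, and recovers the subgradients via Proposition~\ref{normalsubdif}. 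You instead argue directly through conjugate duality: the hypothesis is condition (1) of Corollary~\ref{cor:equivalenceconvexity} (no epigraph translation via Proposition~\ref{prop:convexityepigraph} is even needed), so $(f_1+f_2)^*=f_1^*\oplus f_2^*$ with exact convolution; then the Moreau equality $f^*(l)+f(x)=l(x)$ from Proposition~\ref{prop:moreau}, the exact decomposition $l=l_1+l_2$ with $f_1^*(l_1)+f_2^*(l_2)=f^*(l)$, and inequality~\eqref{eq:Moreau} applied to each $f_i$ force both nonnegative residuals $f_i^*(l_i)+f_i(x)-l_i(x)$ to vanish (here $x\in\dom f_1\cap\dom f_2$ guarantees all quantities are finite), giving $l_i\in\partial_{L_i}f_i(x)$. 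Your argument essentially inlines the same trick the paper uses inside the proof of Proposition~\ref{prop:sumnormal}, so it is shorter and avoids the most delicate step of the paper's proof, namely verifying that the support-set assumption passes to $\delta_{\Omega_1}+\delta_{\Omega_2}$ in the product space; what the paper's detour buys is an explicit link between the subdifferential sum rule and the normal-set sum rule, exhibiting the geometric content of the qualification condition. One small remark: your opening sentence announces that you will ``invoke Proposition~\ref{prop:sumnormal}'' and Proposition~\ref{normalsubdif}, but the chain of deductions you actually carry out never uses them; the conjugate argument stands on its own, so that framing is harmless but superfluous.
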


  \begin{proof}
    Take $u_1\in \partial_{L_1} f_1 (x)$ and $u_2\in \partial_{L_2} f_2 (x)$. Then, for any $y\in X$,
    \begin{align*}
      f_1(y) + f_2(y)
      &\geq f_1(x) + u_1(y) -u_1(x) + f_2(x)+ u_2(y) -u_2(x)\\
      &= f_1(x)+f_2(x) + (u_1+u_2)(y) - (u_1+u_2)(y) 
    \end{align*}
    Therefore $u_1+u_2\in \partial_{L_1+L_2} (f_1+f_2)(x)$.
    
  For the reverse inclusion, take $u\in \partial_{L_1+L_2}f(x)$ and consider the sets.
    \begin{align*}
      \Omega_1&\coloneqq \{(x,\lambda_1,\lambda_2)\in X\times\R\times \R: \lambda_1\geq f_1(x)\}\\
      \Omega_2&\coloneqq \{(x,\lambda_1,\lambda_2)\in X\times\R\times \R: \lambda_2\geq f_2(x)\}.
  \end{align*}

  For $u_1\in L_1$, $\sigma_{\Omega_1}(u_1,-1,0)=\sup_{x\in X,\lambda\geq f(x)} u_1(x) - \lambda=\sup_{x\in X} u_1(x) - f_1(x) = {f_1}^*_{L_1}(u_1)$. Similarly, $\sigma_{\Omega_2}(v,0,-1) = {f_2}^*_{L_2}(v)$. Therefore, $\sigma_{\Omega_1}(u,-1,0) +\sigma_{\Omega_2}(v,0,-1) = {f_1}^*_{L_1}(u) + {f_2}^*_{L_2}(v)$.

  By assumption we have: $\supp{H_{L_1}}{f_1} + \supp{H_{L_2}}{f_2} = \supp{H_{L_1}+H_{L_2}}{f_1+f_2}$, and, by Corollary~\ref{cor:equivalenceconvexity}, there exist $l_1\in L_1$ and $l_2\in L_2$ such that $(f_1+f_2)^*_{L_1+L_2}(l_1+l_2) = {f_1}^*_{L_1}(l_1)+{f_2}^*_{L_2}(l_2) = \sigma_1(l_1,-1,0)+\sigma_2(l_2,0,-1)$.

  In particular, $\sigma_1\oplus \sigma_2$ is $\big((L_1+L_2) \times (\{-1\}+\{0\}) \times (\{0\}+\{-1\})\big)$-convex. Applying Corollary~\ref{cor:suminfimalconvolution} this means that $(\delta_{\Omega_1}+\delta_{\Omega_2})^*_{\big((L_1+L_2) \times (\{-1\}+\{0\}) \times (\{0\}+\{-1\})\big)} = {\delta^*_{\Omega_1}}_{L_1\times \{-1\} \times \{0\}}\oplus {\delta^*_{\Omega_2}}_{L_2\times \{0\} \times \{-1\}}$. From this we can apply Corollary\ref{cor:equivalenceconvexity} to conclude that $\supp{H_{L_1}}{\delta_{\Omega_1}}+\supp{H_{L_2}}{\delta_{\Omega_2}} = \supp{H_{L_1}+H_{L_2}}{\delta_{\Omega_1}+\delta_{\Omega_2}}$.

 Fixing $x\in \dom(f_1)\cap \dom(f_2)$ and considering $\lambda=\max(f_1(x),f_2(x))$, then $(x,\lambda,\lambda)\in \Omega_1\cap\Omega_2\neq\emptyset$. We claim that $(u,-1,-1)\in N_L\left((x,f_1(x),f_2(x)),\Omega_1\cap \Omega_2\right)$. Indeed for all $(y,\lambda,\alpha)\in \Omega_1\cap\Omega_2$, since $u\in\partial_L f(x)$, we have $$u(y)-u(x)+(-1)(\lambda-f_1(x))+(-1)(\alpha-f_2(x))\leq f_1(y)-\lambda+f_2(y)-\alpha\leq 0.$$
 
  Invoking Proposition \ref{prop:sumnormal}, we obtain that 
  \begin{multline*}N_L\left((x,f_1(x),f_2(x)),\Omega_1\cap \Omega_2\right)=\\N_{L_1}\left((x,f_1(x),f_2(x)),\Omega_1\right)+N_{L_2}\left((x,f_1(x),f_2(x)),\Omega_2\right).\end{multline*}
  Now, note that for all  $(x,\lambda,\alpha)\in N_L\left((x,f_1(x),f_2(x)),\Omega_1\right)$, we have $\alpha=0$, and $(x,\lambda,\alpha)\in N_L\left((x,f_1(x),f_2(x)),\Omega_2\right)$ implies that $\lambda=0$. Then $(v,-1,-1)=(u_1,\lambda,0)+(u_2,0,\alpha)$, and 
  $u=u_1+u_2$ and $\lambda=\alpha=-1$. Using Proposition \ref{normalsubdif}, we have that $u_1\in\partial_L f_1(x)$ and $u_2\in \partial_L f_2(x)$ as desired. 
  \end{proof}

  Proposition~\ref{thm:sumsubdif} a generalises the conditions obtained by~\textcite{burachik.ea:2005} in the context of the classical lower semi-continuous convex functions (recall that lower semi-continuous convex functions are abstract convex with respect to linear functions, and that abstract convex sets are closed convex sets~\parencite{rubinov:2000} so that the abstract convex hull of a set is its weak* closure.)  They also generalise the results by~\cite{jeyakumar.ea:2007} in the case when the set $L$ is additive and $0\in L$.

  \section{The abstract subdifferential as an abstract monotone operator}\label{sec:monotoneoperator}
  This section is dedicated to the study of the abstract subdifferential of an L-convex function. Following we return to the definition of abstract monotonicity. From here onward we assume that the set $X$ is a Banach space.
  \begin{definition}[Abstract monotonicity~{\parencite[Eq. (1.1.8)]{FoundationsOfPallas1997}}] Let $L\subset \F{X}$ be a set of abstract linear functions. An operator $T: X \mto L$ is said to be $L-$abstract monotone with respect to $L$ if for any $x,y\in X$, $u\in T(x)$, $v\in T(y)$, $u(x) - u(y) + v(y) -v(x)\geq 0$.

  Furthermore, $T$ is \emph{maximal} if it is maximal with respect to set inclusion:  if $(y,v)\in X\times L$, if $u(x) - u(y) + v(y) -v(x)\geq 0$ for all $(x,u)\in T$, then $v\in T(y)$.
  \end{definition}
The inverse operator, denoted by $T^{-1}:L\rightrightarrows X$, is defined by $T^{-1}(u):=\{x\in X: u\in T(x)\}$.
  \begin{proposition}
  Consider two abstract monotone operators $T_1:X\to L_1$ and $T_2:X\to L_2$ . Then
  \begin{enumerate}
    \item $T_1^{-1}$ is abstract monotone; furthermore if $T_1$ is $X-$abstract maximal monotone, then so is $T_1^{-1}$.
    \item For any $\lambda_1\geq 0, \lambda_2\geq 0$, $\lambda_1 T_1 + \lambda_2 T_2$ is $\lambda_1 L_1 + \lambda_2L_2-$abstract monotone.
  \end{enumerate}
  \end{proposition}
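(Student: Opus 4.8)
The plan is to prove the two parts of the proposition separately, both being essentially direct verifications from the definition of abstract monotonicity, with the maximality assertion in part (1) requiring a bit more care.

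\textbf{Part (1).} First I would unpack the definition of $T_1^{-1}$: recall $T_1^{-1}(u) = \{x : u \in T_1(x)\}$, so that $x \in T_1^{-1}(u)$ is the same statement as $u \in T_1(x)$. The operator $T_1^{-1}$ maps $L_1 \rightrightarrows X$, so being ``$X$-abstract monotone'' should mean: for any $u, v \in L_1$, $x \in T_1^{-1}(u)$, $y \in T_1^{-1}(v)$, the evaluation-type inequality holds with the roles of points and functions swapped, i.e.\ $x(u) - x(v) + y(v) - y(u) \geq 0$, where now the elements of $X$ play the role of ``linear functions'' acting on $L_1$ via $x(u) := u(x)$. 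With this reading the inequality is literally $u(x) - v(x) + v(y) - u(y) \geq 0$, which is exactly the abstract monotonicity of $T_1$ applied to the pairs $(x,u), (y,v) \in T_1$ (just rearranged). So part (1)'s first claim is immediate once the convention for ``$X$ acting on $L_1$'' is made explicit. For maximality: suppose $T_1$ is maximal and take $(v, y) \in L_1 \times X$ such that $x(u) - x(v) + y(v) - y(u) \geq 0$ for all $(u,x) \in T_1^{-1}$, i.e.\ $u(x) - v(x) + v(y) - u(y) \geq 0$ for all $(x,u) \in T_1$. This is precisely the hypothesis in the definition of maximality of $T_1$ for the pair $(y,v) \in X \times L_1$, so $v \in T_1(y)$, i.e.\ $y \in T_1^{-1}(v)$, which is what maximality of $T_1^{-1}$ requires.

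\textbf{Part (2).} Here I would take $x, y \in X$ and $w \in (\lambda_1 T_1 + \lambda_2 T_2)(x)$, $w' \in (\lambda_1 T_1 + \lambda_2 T_2)(y)$. By definition of the sum of operators, $w = \lambda_1 u_1 + \lambda_2 u_2$ with $u_i \in T_i(x)$, and $w' = \lambda_1 v_1 + \lambda_2 v_2$ with $v_i \in T_i(y)$. Then I compute $w(x) - w(y) + w'(y) - w'(x)$, expand by linearity of evaluation in the coefficients, and regroup as $\lambda_1\big(u_1(x) - u_1(y) + v_1(y) - v_1(x)\big) + \lambda_2\big(u_2(x) - u_2(y) + v_2(y) - v_2(x)\big)$. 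Each bracketed term is $\geq 0$ by abstract monotonicity of $T_1$ and $T_2$ respectively, and since $\lambda_1, \lambda_2 \geq 0$ the whole sum is $\geq 0$. This shows $\lambda_1 T_1 + \lambda_2 T_2$ is abstract monotone with respect to $\lambda_1 L_1 + \lambda_2 L_2$ (noting $w, w' \in \lambda_1 L_1 + \lambda_2 L_2$ by construction).

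\textbf{Main obstacle.} The computations are entirely routine; the only genuine subtlety — and the thing I would be most careful to state correctly — is the symmetry in the definition of abstract monotonicity that makes the ``inverse operator'' notion meaningful, namely that the defining inequality $u(x) - u(y) + v(y) - v(x) \geq 0$ is symmetric under simultaneously swapping $x \leftrightarrow u$ and $y \leftrightarrow v$ (treating points of $X$ as functionals on $L$). One should make sure the ambient convention identifies $X$ with a set of functions on $L$ (via $x \mapsto (u \mapsto u(x))$) so that ``$X$-abstract monotone'' for an operator $L \rightrightarrows X$ is well-defined; once that is pinned down, part (1) is a tautology and part (2) is a one-line linearity argument. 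I would also remark that for part (2) one implicitly needs that the evaluation map is additive and positively homogeneous in its function argument, which holds because $(\lambda u)(x) = \lambda u(x)$ and $(u+v)(x) = u(x) + v(x)$ for real-valued functions.
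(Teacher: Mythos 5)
Your proposal is correct and follows essentially the same route as the paper: part (1) is the direct observation that the defining inequality is symmetric under swapping the roles of points and functions (plus the same transfer of the maximality hypothesis back to $T_1$), and part (2) is the same decomposition $u=\lambda_1u_1+\lambda_2u_2$, $v=\lambda_1v_1+\lambda_2v_2$ followed by regrouping and using $\lambda_1,\lambda_2\geq 0$. The only difference is that you make explicit the convention under which elements of $X$ act as functions on $L_1$ (via $x\mapsto(u\mapsto u(x))$), which the paper leaves implicit with ``it is clear that''; this is a welcome clarification rather than a different argument.
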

  \begin{proof}
   \begin{enumerate}
    \item Let $u,v \in L_1$ and $x,y\in X$ be such that $x\in T_1^{-1}(u)$ (that is, $u\in T_1(x)$) and $y\in T_1^{-1}(v)$ (that is, $v\in T_1(y)$). Then it is clear that $u(x) - u(y) + v(y) -v(x)\geq 0$, implying that $T_1^{-1}$ is abstract monotone. Now consider a pair $(u,x)\in L_1\times X$ such that for all $(v,y)\in L_1\times X$ such that $y\in T^{-1}(v)$, $u(x)-u(y)+v(y)-v(x)\geq 0$. If $T_1$ is maximal, then we have that $u\in T_1(x)$, that is, $x\in T_1^{-1}(u)$, proving the maximality of $T^{-1}$.
      \item Let $x,y\in X$ and $u\in (\lambda_1 T_1 + \lambda_2 T_2)(x)$ and $v\in (\lambda_1 T_1 + \lambda_2 T_2)(v)$. Then there exist $u_1\in T_1(x)$ and $u_2\in T_2(x)$ such that $u=\lambda_1u_1 + \lambda_2u_2$ and  $v_1\in T_1(y)$ and $v_2\in T_2(y)$ such that $v=\lambda_1v_1 + \lambda_2v_2$. We have:
      \begin{multline*}
        u(x) - u(y) +v(y) - v(x) = \lambda_1 (u_1(x) - u_1(y) + v_1(y) - v_1(x)) \\+ \lambda_2 (u_2(x) - u_2(y) + v_2(y) - v_2(x))\geq 0.
      \end{multline*}
    \end{enumerate}
  \end{proof}

Denote the set of linear functions by $\mathcal{L}$, then we state the following Assumption.
  \begin{assumption}\label{ass:bounded}
      Given a L-convex function $f\in\F{X}$, for each $a>0$ and any $x\in X$, $\supp{H_L}{f} + \supp{H_{\mathcal{L}}}{a\|\cdot-x\|}$
is $H_{L+\mathcal{L}}$-convex,
    and the function $f+a\|\cdot-x\|$ is bounded from below on the domain of $f$.
  \end{assumption}

  The next result is a generalisation of the Br{\o}nsted-Rockafellar theorem, which is important to prove the main result of this section.

  \begin{theorem}[Br{\o}nsted-Rockafellar theorem extended]\label{thm:bronstedrockafellar}
  Let $f$ be a proper lsc $L$-convex function, $y\in \dom f$ satisfying Assumption~\ref{ass:bounded} and $v\in \dom f^*_L$, and let $\lambda\geq 0$ and $\mu\geq 0$ be such that $f(y) + f^*_L(v) \leq v(y) + \lambda\mu$. Then, there exists $z\in X$ and $w\in \partial_L f(z)$ such that $\|z-y\|\leq \lambda$ and $w-v = \langle p,\cdot\rangle$, where $p\in X^*$ such that $\|p\|\leq \mu$.
  \end{theorem}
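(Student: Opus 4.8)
The plan is to mimic the classical proof of the Brønsted–Rockafellar theorem by applying the extended sum rule (Theorem~\ref{thm:sumsubdif}) to the perturbed function $g \coloneqq f + \lambda\|\cdot - y\|$, and then exploiting the fact that the subdifferential of a norm consists of vectors of bounded dual norm. First I would record the starting inequality in conjugate form: since $f(y) + f^*_L(v) \le v(y) + \lambda\mu$, Proposition~\ref{prop:moreau} (or rather its quantitative version, the Moreau inequality~\eqref{eq:Moreau}) tells us $v$ is "$\lambda\mu$-almost" in $\partial_L f(y)$. The role of Assumption~\ref{ass:bounded} is twofold: it guarantees $g$ is bounded below on $\dom f$, so that its infimum over $X$ is a finite real number, and it is exactly the hypothesis needed to invoke the sum rule for the pair $(f, \lambda\|\cdot - y\|)$ with families $L$ and $\mathcal L$ (where $\mathcal L$ is the family of functions generating the norm, presumably $\mathcal L = \{\langle p, \cdot\rangle : \|p\| \le 1\}$ scaled, or similar).

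Next I would locate an approximate minimizer: choose $z \in X$ with $g(z) \le \inf_X g + \varepsilon$ for a suitably small $\varepsilon$ tied to $\lambda,\mu$; here one wants to be careful and possibly run an Ekeland-variational-principle-style argument, or — since we only need existence of *some* $z$ and $w$ — extract $z$ directly from the exactness of the infimal convolution provided by Corollary~\ref{cor:equivalenceconvexity}, which is built into Assumption~\ref{ass:bounded}. The chain is: $\supp{H_L}{f} + \supp{H_{\mathcal L}}{\lambda\|\cdot-y\|}$ being $H_{L+\mathcal L}$-convex $\Rightarrow$ (Corollary~\ref{cor:equivalenceconvexity}) $(f + \lambda\|\cdot-y\|)^* = f^*_L \oplus (\lambda\|\cdot-y\|)^*$ with *exact* infimal convolution. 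Evaluating both sides at $v$, exactness produces a splitting $v = w - p$ (with $w \in L+\mathcal L$ accounting for the $f$-part and $p$ the norm part — more precisely $w = v + p$ with $\langle p,\cdot\rangle$ a subgradient of the norm) such that $f^*_L(w) + (\lambda\|\cdot-y\|)^*(p) = (f+\lambda\|\cdot-y\|)^*(v)$, and the quantitative bound $f(y)+f^*_L(v) \le v(y) + \lambda\mu$ forces the two "Moreau gaps" to be small.

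Then I would translate this back through the sum rule: Theorem~\ref{thm:sumsubdif} gives $\partial_{L+\mathcal L}(f + \lambda\|\cdot - y\|)(z) = \partial_L f(z) + \partial_{\mathcal L}(\lambda\|\cdot - y\|)(z)$ at the minimizer $z$, where $0 \in \partial_{L+\mathcal L} g(z)$ by Fermat's rule (optimality), so $0 = w' + p'$ with $w' \in \partial_L f(z)$ and $p' \in \partial_{\mathcal L}(\lambda\|\cdot - y\|)(z)$; the subgradients of $\lambda\|\cdot-y\|$ are exactly the $\langle p,\cdot\rangle$ with $\|p\|_* \le \lambda$... — and here I need to be careful about which constant, $\lambda$ or $\mu$, controls $\|p\|$ and which controls $\|z - y\|$; reading the statement, $\|z-y\| \le \lambda$ and $\|p\| \le \mu$, so the roles are: the radius-$\lambda$ ball comes from how far the approximate minimizer of $f + \mu\|\cdot-y\|$ can drift, and the dual-norm-$\mu$ bound from the subgradient of that norm scaled by $\mu$. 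Thus I would actually perturb by $\mu\|\cdot - y\|$ (not $\lambda\|\cdot-y\|$), use $f(y) + f^*_L(v) \le v(y) + \lambda\mu = v(y) + \mu\cdot\lambda$ to say $y$ is within $\mu\cdot\lambda / \mu = \lambda$ distance... the bookkeeping needs the standard two-inequalities-multiply-to-the-bound trick.

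\textbf{Main obstacle.} The genuinely delicate step is producing the exact minimizer $z$ with the simultaneous distance and subgradient estimates without a direct Ekeland principle — in the abstract setting we cannot assume $f$ is defined on a nice finite-dimensional space, and completeness of $X$ (a Banach space, by the standing assumption of this section) must be used somewhere. I expect the cleanest route is to not minimize at all but to extract $z$, $w$, $p$ purely algebraically from the exact infimal convolution $(f + \mu\|\cdot - y\|)^* = f^*_L \oplus (\mu\|\cdot-y\|)^*$ guaranteed by Assumption~\ref{ass:bounded} via Corollary~\ref{cor:equivalenceconvexity}: exactness at the point $v$ yields the decomposition, and then Proposition~\ref{prop:moreau} converts the attained infimum into the subdifferential membership $w \in \partial_L f(z)$ for the appropriate $z$ (the point achieving the sup in $f^*_L(w)$), with the norm estimate $\|p\| \le \mu$ coming from $p \in \dom(\mu\|\cdot-y\|)^*$, i.e. $p$ lies in the $\mu$-ball because the conjugate of $\mu\|\cdot\|$ is the indicator of the $\mu$-ball of $X^*$. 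The distance bound $\|z - y\| \le \lambda$ then follows by plugging the hypothesis $f(y) + f^*_L(v) \le v(y) + \lambda\mu$ into the Moreau equalities and using $\langle p, z - y\rangle \le \|p\|\,\|z-y\| \le \mu\|z-y\|$ together with $\mu\|z - y\| \le \lambda\mu$. The subtlety I will need to watch is ensuring $w - v$ is genuinely of the form $\langle p, \cdot\rangle$ — this requires that $\mathcal L$ (the family generating the norm) consist of linear functionals, so that $\partial_{\mathcal L}$ of a norm lands in $X^*$, and that $L + \mathcal L$ interacts correctly so the "$f$-component" $w$ differs from $v$ by exactly such a functional.
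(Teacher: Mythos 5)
You have the right skeleton---perturb by a multiple of the norm, invoke the sum rule under Assumption~\ref{ass:bounded}, and read the bound $\|p\|\leq\mu$ off the subdifferential of the norm term---but the route you finally commit to has a genuine gap. You propose to avoid any variational principle and to extract $z$, $w$, $p$ ``purely algebraically'' from exactness of the infimal convolution $(f+\mu\|\cdot-y\|)^*=f^*_L\oplus(\mu\|\cdot-y\|)^*$, saying that Proposition~\ref{prop:moreau} then converts the attained infimum into $w\in\partial_L f(z)$ for ``the point $z$ achieving the sup in $f^*_L(w)$''. That step fails: exactness of the infimal convolution concerns the splitting $v=l_1+l_2$ in the dual variable and says nothing about attainment of the supremum $\sup_{x\in X}\{w(x)-f(x)\}$ defining $f^*_L(w)$. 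Without such a maximiser there is no candidate $z$, and Proposition~\ref{prop:moreau} gives nothing. This is precisely the difficulty the Br{\o}nsted--Rockafellar theorem exists to overcome: $v$ is only an approximate subgradient at $y$, and trading it for an exact subgradient at a nearby point genuinely requires completeness of $X$ through a variational principle, not conjugacy identities alone. (Your fallback of taking an approximate minimiser $z$ of the perturbed function has the same defect: it only yields $0$ in an $\varepsilon$-subdifferential, not $0\in\partial\bigl(f-v+\mu\|\cdot-y\|\bigr)(z)$.)

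The paper fills exactly this hole with Ekeland's variational principle. Setting $\alpha=\lambda\mu$ (the case $\alpha=0$ being immediate from Proposition~\ref{prop:moreau}), it works with $h=f-v$, which is $(L-\{v\})$-convex by Remark~\ref{rmk:diffconvexlinear} and, by the Moreau inequality~\eqref{eq:Moreau} and the hypothesis, satisfies $\inf_{x\in X}h(x)\geq h(y)-\alpha$. Ekeland's principle then produces $z$ with $\|z-y\|\leq\lambda$ such that $z$ is an \emph{exact} minimiser of $\tilde h=h+\frac{\alpha}{\lambda}\|\cdot-z\|$; note the penalty is centred at $z$, not at $y$, and its coefficient is $\alpha/\lambda=\mu$, which settles the $\lambda$-versus-$\mu$ bookkeeping you were unsure about. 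Fermat's rule gives $0\in\partial_{(L-\{v\})+\mathcal{L}}\tilde h(z)$, and Assumption~\ref{ass:bounded} licenses Theorem~\ref{thm:sumsubdif}, yielding $0\in\partial_L f(z)-\{v\}+\mu\{\langle p,\cdot\rangle:\|p\|\leq 1\}$, which is the stated conclusion. So the conjugacy/sum-rule portion of your plan is sound and matches the paper, but you must reinstate the Ekeland step (which you briefly considered and then discarded) to produce $z$; the exact-infimal-convolution shortcut cannot replace it.
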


  \begin{proof}
    Set $\alpha = \lambda\mu$. We can assume that $\alpha>0$ as otherwise, by Proposition \ref{prop:moreau}, we have $v\in \partial_L f(y)$ and we are done. Define $h = f-v$. It can be seen, from Remark~\ref{rmk:diffconvexlinear}, that $h$ is $L-\{v\}$-convex, and for any $x\in X$, $\partial_{L-\{v\}} h(x) = \partial_L f(x) - \{v\}$. For any $x\in X$, according to \eqref{eq:Moreau} we have that $$h(x) = -(v(x)-f(x))\geq -f^*_L(v)\geq f(y)-v(y) -\lambda\mu = h(y)-\alpha.$$
    Hence, $h$ is bounded from below and $\alpha + \inf_{x\in X} h(x)\geq h(y)$.

    Applying Ekeland's principle (Theorem 1.1 in \cite{ekeland}) to $h$, we get the existence of $z\in X$ such that $\|z-y\|\leq \lambda$ and $z=\argmin(h + \frac{\alpha}{\lambda}\|\cdot - z\|)$. Define $\tilde{h}:=h + \frac{\alpha}{\lambda}\|\cdot - z\|$.

    By Proposition~\ref{prop:sumconvexfunctions}, $\tilde{h}$ is in $((L-v)+\mathcal{L})$-convex, where $\mathcal{L}$ is the set of linear functions, and so $0\in \partial_{L-\{v\}+\mathcal{L}} \tilde{h}(z)$.
    Since Assumption~\ref{ass:bounded} holds, we can apply Theorem~\ref{thm:sumsubdif} to find that
    $$ 0\in \partial_L f(z) - \{v\} + \mu \{\langle p,\cdot\rangle: p\in B(0,1)\}. $$

    Hence, there exists $w\in \partial_L f(z)$ such that $v-w\in \{\langle p,\cdot\rangle: \|p\|\leq \mu\}$.
  \end{proof}
  
  \begin{remark}
  Note that $\{v\}\subseteq \{v + \langle p,\cdot\rangle: \|p\|\leq \mu\}\cap L \neq \emptyset$. When equality occurs, this implies that $v=w$ in the theorem above.
  \end{remark}

  Our next result concerns the maximal monotonicity of the abstract subdifferential operator. It is not hard to see that the $L$-subdifferential is $L$-monotone~\parencite[Proposition 1.9]{FoundationsOfPallas1997}. However, \textcite[example 3.1]{burachik.ea.08} showed that it is not generally maximal $L$-monotone. Several authors have investigated conditions under which it is~\parencite[among others]{eberhard.ea:2010,eberhard.ea:2011,mohebi.ea:2012,mohebi.ea:2012_2,mohebi.ea:2014}. Here we show that under Assumption~\ref{ass:bounded} the result is also true. Our proof generalises the proof by~\textcite{ivanov.ea:2017}.

  \begin{example}
  Let $X$ be a Banach space, and $f$ any convex function.  Since the domain of $g_a\coloneqq a\|\cdot - x\|$ is $X$ for any $x$ and any $a>0$, then $\ri(\dom(f))\cap \ri(\dom(g_a))$ is nonempty, and it is known that this implies that $\epi f^* + \epi g_a^* = \epi (f+g_a)^*$ (see~\cite{burachik.ea:2004}). From this we infer that $f$ satisfies Assumption~\ref{ass:bounded}.

  \end{example}

  \begin{proposition}\label{prop:zeroinsubdifzero}
    Let $f$ be a proper lsc $L$-convex function that satisfies Assumption~\ref{ass:bounded} and
    \begin{equation}\label{eq:variationalIneq}
      u(x)\geq 0, \forall x\in \dom f, u\in \partial_L f(x).
    \end{equation}
    Then $0\in \partial_L f(0)$.
  \end{proposition}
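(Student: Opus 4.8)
The plan is to argue by contradiction: suppose $0 \notin \partial_L f(0)$, i.e. $0$ is not an $L$-subgradient of $f$ at $0$ (here $0$ denotes the zero function, which we implicitly assume lies in $L$). The negation says there exists $y \in X$ with $f(y) < f(0)$. The strategy is to use the coercive perturbation $g_a = a\|\cdot - 0\| = a\|\cdot\|$ together with the extended Br\o{}nsted--Rockafellar theorem (Theorem~\ref{thm:bronstedrockafellar}) to produce a point $z$ and a subgradient $w \in \partial_L f(z)$ that is forced to take a negative value somewhere, contradicting the hypothesis~\eqref{eq:variationalIneq}. Concretely, for large $a > 0$ the function $f + a\|\cdot\|$ is bounded below (by Assumption~\ref{ass:bounded}) and, since $f(y) < f(0)$, for $a$ small enough the infimum of $f + a\|\cdot\|$ is still strictly below $f(0)$ — because evaluating at $y$ gives $f(y) + a\|y\|$, which is $< f(0)$ once $a < (f(0)-f(y))/\|y\|$. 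So $0$ is nearly, but not exactly, a minimizer of the perturbed function up to an $\varepsilon$ controlled by $a$.

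Next I would feed this into the Br\o{}nsted--Rockafellar machinery. Writing the near-minimality of $0$ for $f + a\|\cdot\|$ in conjugate form, $f(0) + f^*_L(0) \le 0\cdot(0) + \lambda\mu$ for suitable small $\lambda,\mu$ (with $\lambda\mu$ comparable to the gap $f(0) - \inf(f + a\|\cdot\|)$), Theorem~\ref{thm:bronstedrockafellar} yields $z \in X$ with $\|z\| \le \lambda$ and $w \in \partial_L f(z)$ with $w = \langle p,\cdot\rangle$ for some $p \in X^*$, $\|p\| \le \mu$. Now I use~\eqref{eq:variationalIneq}: $w(x) \ge 0$ for all $x \in \dom f$. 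But $w = \langle p, \cdot\rangle$ is linear, so $w(x) \ge 0$ and $w(-x) = -w(x) \ge 0$ on a symmetric-enough portion of $\dom f$ forces $p = 0$, hence $w = 0$, hence $0 \in \partial_L f(z)$. Applying the subgradient inequality for $w=0$ at $z$ against the point $y$ (assuming $y \in \dom f$, which we may, as $f(y) < f(0) < +\infty$) gives $f(y) \ge f(z)$, and against $0$ gives $f(0) \ge f(z)$; combined with the fact that $z$ is close to a point where the perturbed function is small, I want to extract $f(z) \le \inf f$, and then push $a \to 0$ so that $z \to 0$ and lower semicontinuity gives $f(0) \le \liminf f(z) \le \inf f \le f(y) < f(0)$, a contradiction.

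The main obstacle I anticipate is the step where I deduce $w = 0$ and then close the loop. Two subtleties need care. First, concluding $p = 0$ from $\langle p,\cdot\rangle \ge 0$ on $\dom f$ requires $\dom f$ to be large enough around the relevant points (e.g. containing $0$ in its interior, or at least a balanced neighborhood) — if $f$ has a thin domain this needs an extra argument or a mild assumption, possibly absorbing it into Assumption~\ref{ass:bounded} or the standing Banach-space hypotheses. Second, the limiting argument as $a \to 0$ must be set up so that $\lambda = \lambda(a) \to 0$ and the bound $\|z\| \le \lambda$ genuinely forces $z \to 0$; this is where one must track carefully how $\lambda\mu$ depends on $a$ and choose, say, $\lambda = \sqrt{f(0) - \inf(f + a\|\cdot\|)}$ and $\mu$ the same, so both shrink as the gap shrinks. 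The lower semicontinuity of $f$ then delivers the final contradiction. Everything else is routine manipulation of the conjugate inequality and the definitions.
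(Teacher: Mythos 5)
There are two genuine gaps here, and they sink the argument as proposed. First, you misread hypothesis~\eqref{eq:variationalIneq}: it states $u(x)\geq 0$ only at the base point $x$ at which $u\in\partial_L f(x)$, not that $u\geq 0$ on all of $\dom f$. So from $w=\langle p,\cdot\rangle\in\partial_L f(z)$ you may only conclude $\langle p,z\rangle\geq 0$, and the step ``$w\geq 0$ on a symmetric portion of $\dom f$ forces $p=0$'' has no support in the hypothesis (quite apart from the thin-domain issue you flag yourself). Second, the quantitative setup for Theorem~\ref{thm:bronstedrockafellar} does not close. Applying it to $f$ at $0$ with $v=0$ requires $0\in L$ (not assumed) and the premise $f(0)+f^*_L(0)\leq\lambda\mu$, where $f^*_L(0)=-\inf f$; under your contradiction hypothesis $f(y)<f(0)$ this gap is a \emph{fixed} positive quantity (possibly $+\infty$, since Assumption~\ref{ass:bounded} only bounds $f+a\|\cdot-x\|$ below, not $f$), and likewise $f(0)-\inf\bigl(f+a\|\cdot\|\bigr)\geq f(0)-f(y)-a\|y\|$ does not shrink as $a\to 0$. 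So $\lambda\mu$ cannot be made small, the bound $\|z\|\leq\lambda$ cannot force $z\to 0$, and the final lsc limiting argument cannot be launched. You also never invoke Assumption~\ref{ass:bounded} together with the sum rule (Theorem~\ref{thm:sumsubdif}), without which the hypothesis about $\partial_L f$ cannot be transferred to subgradients of the perturbed function at all.

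For comparison, the paper argues directly, not by contradiction: it works with $f_a=f+a\|\cdot\|$, uses the sum rule (licensed by Assumption~\ref{ass:bounded}) to split any $p\in\partial_{L+\mathcal{L}}f_a(x)$ as $p_1+p_2$ and deduces $p(x)\geq a\|x\|$ from \eqref{eq:variationalIneq} applied to $p_1$ at $x$ and the subgradient inequality for the norm applied to $p_2$; it then applies Theorem~\ref{thm:bronstedrockafellar} along a \emph{minimizing sequence} of $f_a$ (where the conjugate gap $\varepsilon_n$ genuinely tends to $0$), obtaining subgradients that are linear with norm at most $\sqrt{\varepsilon_n}$, which combined with $p(x)\geq a\|x\|$ forces the near-minimizers to $0$; lower semicontinuity then makes $0$ a global minimizer of $f_a$ for every $a>0$, and letting $a\to 0$ gives $f\geq f(0)$, i.e.\ $0\in\partial_L f(0)$. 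Your instinct to perturb by $a\|\cdot\|$ and use Br{\o}nsted--Rockafellar is the right one, but the theorem must be applied to the perturbed function where the gap is small, and the hypothesis must enter through the sum rule at the base point, not through a global nonnegativity of the subgradient.
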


  \begin{proof}
    For $a>0$, define $g_a\coloneqq a\|\cdot\|$.

    If $p\in \partial_{\mathcal{L}} g_a(x)$ then, $\langle p,0 - x\rangle\leq a\|0\|-a\|x\|$, that is,
    \begin{equation}\label{eq:pgreaterg}
      p(x)\geq g_a(x).
    \end{equation}

    Define $f_a(x)\coloneqq f(x) + g_a(x)$. Then $\partial_{L+\mathcal{L}} f_a(x) = \partial_{L} f(x) + \partial_{\mathcal{L}} g_a(x)$. Let $p\in \partial_{L+\mathcal{L}} f_a(x)$. Then there exist $p_1\in \partial_{L} f(x)$ and $p_2\in \partial_{\mathcal{L}} g_a(x)$ such that $p=p_1+p_2$. Then, we have that
    \begin{equation}
      p(x) = p_1(x) + p_2(x)\geq 0 + g_a(x) = g_a(x).
    \end{equation}

    Let $x_n$ be a minimising sequence of $f_a$, that is $f_a(x_n) < f_a(x) + \varepsilon_n$, $\forall x\in X$ for some $\varepsilon_n\to 0$. Applying Theorem~\ref{thm:bronstedrockafellar}, we find that there exists $y_n\in  B(x_n,\sqrt{\varepsilon_n})$ and $p_n\in \partial_{L+\mathcal{L}} f_a(y_n)$ such that $\exists l\in B(0,\sqrt{\varepsilon_n}): p_n-0 = \langle l,\cdot \rangle$.

    Therefore we have $g_a(y_n)\to 0$, and by continuity of $g_a$, $x_n\to 0$. So, $0$ is the global minimiser of $f_a$.

    In other words, $f_a(x)\geq f_a(0)$. Equivalently, $f(x)\geq f(0) - g_a(x) = f(0)-\|x\|$. Since $a$ was arbitrary, we conclude that $f(x)\geq f(0)$ and therefore $0\in \partial_L f(0)$.
  \end{proof}

  \begin{theorem}
    Let $f$ be an $L$-convex function satisfying Assumption~\ref{ass:bounded}. Then $\partial_L f$ is a maximal $L$-monotone operator. 
  \end{theorem}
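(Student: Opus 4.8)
The plan is to follow the classical template for proving maximal monotonicity of subdifferentials (the argument going back to Rockafellar, and more precisely the variant of Ivanov–Zlateva cited in the excerpt), adapting it to the abstract setting via the tools just developed: the extended Brønsted–Rockafellar theorem (Theorem~\ref{thm:bronstedrockafellar}), the sum rule (Theorem~\ref{thm:sumsubdif}), and Proposition~\ref{prop:zeroinsubdifzero}. We already know from \parencite[Proposition 1.9]{FoundationsOfPallas1997} that $\partial_L f$ is $L$-monotone, so the entire content is maximality.

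First I would take a pair $(y,v)\in X\times L$ that is "$L$-monotonically related" to the graph of $\partial_L f$, i.e. $u(x)-u(y)+v(y)-v(x)\ge 0$ for every $(x,u)$ with $u\in\partial_L f(x)$; the goal is to conclude $v\in\partial_L f(y)$. The standard reduction is to normalise the problem: replace $f$ by the shifted function $\tilde f := f(\cdot + y) - v(\cdot + y)$ (combining the horizontal-shift and vertical-shift rules from Remark~\ref{rmk:diffconvexlinear}), so that $\tilde f$ is abstract convex with respect to the correspondingly shifted family $\tilde L$, and the monotonicity relation becomes exactly the hypothesis~\eqref{eq:variationalIneq} of Proposition~\ref{prop:zeroinsubdifzero} for $\tilde f$, namely $w(x)\ge 0$ for all $x\in\dom\tilde f$ and all $w\in\partial_{\tilde L}\tilde f(x)$. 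Here one must check carefully that the monotonicity relation with the single point $(y,v)$ transfers, under the shifts, to a statement about \emph{all} subgradients of $\tilde f$ at \emph{all} points — this is the usual "$(y,v)$ monotonically related $\iff$ $0$ has nonnegative subgradients of the shifted function" equivalence, and it needs the identity $\partial_{\tilde L}\tilde f(x) = \partial_L f(x+y) - v(\cdot+y)$ together with the definition of abstract monotonicity written out.

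Then I would invoke Proposition~\ref{prop:zeroinsubdifzero} applied to $\tilde f$ to obtain $0\in\partial_{\tilde L}\tilde f(0)$, and unwind the shifts to conclude $v\in\partial_L f(y)$, which is precisely maximality. To apply Proposition~\ref{prop:zeroinsubdifzero} one needs $\tilde f$ to be proper lsc and to satisfy Assumption~\ref{ass:bounded}; properness and lower semicontinuity are preserved by the affine shifts, and I would need to argue that Assumption~\ref{ass:bounded} is inherited by $\tilde f$ from $f$ — since the assumption concerns $\supp{H_{\tilde L}}{\tilde f} + \supp{H_{\mathcal L}}{\|\cdot-x\|}$ and boundedness below of $\tilde f + \|\cdot - x\|$, and these transform compatibly under vertical and horizontal shifts (the vertical-shift rule in Remark~\ref{rmk:diffconvexlinear} shows $(L-v)$-convexity is equivalent to $L$-convexity, and horizontal shifts are handled by the $X^y$, $L^y$ machinery there).

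The main obstacle I anticipate is precisely this bookkeeping of Assumption~\ref{ass:bounded} and the families $L_x$ / $L^y$ under the composite shift: the assumption bundles together an additivity-of-support-sets condition \emph{and} a coercivity condition, and one must verify that after subtracting the abstract-linear-ish function $v$ and translating the domain, both halves still hold with the translated families; in particular the linear perturbations $\|\cdot - x\|$ in the assumption interact with the translation, so the equality $\supp{H_{\tilde L}}{\tilde f}+\supp{H_{\mathcal L}}{\|\cdot - x\|} = \supp{H_{\tilde L + \mathcal L}}{\tilde f + \|\cdot - x\|}$ must be traced back to the corresponding equality for $f$ at the translated point $x+y$. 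Everything else (the $L$-monotonicity, the final unwinding) is routine, and the real engine — the Brønsted–Rockafellar/Ekeland argument — has already been packaged inside Proposition~\ref{prop:zeroinsubdifzero}, so the proof should be short once the reductions are set up correctly.
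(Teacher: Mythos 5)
Your proposal is correct and follows essentially the same route as the paper: reduce to the pair $(y,v)$ monotonically related to the graph, pass to the shifted function $\bar f = f(\cdot+y)-v(\cdot+y)$ with the correspondingly shifted, vanishing-at-zero family via Remark~\ref{rmk:diffconvexlinear}, apply Proposition~\ref{prop:zeroinsubdifzero} to get $0\in\partial \bar f(0)$, and unwind to obtain $v\in\partial_L f(y)$. The bookkeeping issue you flag --- that Assumption~\ref{ass:bounded} must be inherited by the shifted function --- is a genuine point which the paper's own proof passes over silently, so spelling it out would only strengthen the argument.
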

  
  \begin{proof}
    First we show that $\partial_L f$ is $L$-monotone. Let $x,y \in X$, and $u\in \partial_L f(x)$ and $v\in \partial_L f(y)$. Then by definition of the abstract subdifferential,
    $$ f(x) \geq f(y) + u(y) - u(x) \geq f(x) + v(x) - v(y) + u(y) - u(x). $$
    Therefore
    $$v(y) - v(x) + u(x) - u(y) \geq 0.$$

  Now suppose that $(y,v) \in X\times L$ is such that \begin{equation}\label{eq:monotone}u(x) - u(y) + v(y) -v(x)\geq 0\end{equation} for all $x\in X$, $u\in \partial_Lf(x)$.

  Define $\bar{f}(x)\coloneqq f(x+y) - v(x+y)$ and the following sets:
  \begin{itemize}
    \item $L^v\coloneqq L-v = \{u-v: u\in L\}$
    \item $L^{v,y}\coloneqq \{x\mapsto l(x+y): l\in L^v\} = \{x\mapsto u(x+y) - v(x+y): u\in L\}$
    \item $L^{v,y}_0 \coloneqq \{l-l(0): l\in L^{v,y}\} = \{x\mapsto u(x+y) - v(x+y) + v(y)-u(y): u\in L\}$.
  \end{itemize}
  Then the function $\bar{f}$ is $L^{v,y}$-convex, and more generally $H_{L^{v,y}_0}$-convex. By Remark~\ref{rmk:diffconvexlinear}, $\partial_{L^{v,y}_0}\bar{f}(x) = \{x\mapsto u(x+y)-u(y)-v(x+y)+v(y): u\in \partial_{L}f(x+y)\}$. It follows from Equation~\eqref{eq:monotone} that for any $l\in \partial_{L^{v,y}_0}(x)$, $l(x)\geq 0$.

  Hence, Proposition~\ref{prop:zeroinsubdifzero} implies that $0\in \partial_{L^{v,y}_0} \bar{f}(0)$. That is, $\exists u\in \partial_L f(0+y)$ such that $u-v-u(y)+v(y) = 0$, that is, $u = v + u(y) -v(y)$.

  Therefore, we have that for any $x\in X$, $$f(x)\geq f(y) + (v(x) + u(y) - v(y)) - (v(y) + u(y) - v(y)) = f(y) + v(x) - v(y).$$ Therefore, $v\in \partial_L f(y)$ which concludes our proof.
  \end{proof}

  \begin{remark}
    \textcite[proposition 1.1.11]{FoundationsOfPallas1997} proved that every maximal cyclic monotone operator is the subdifferential of some function $f$.
  \end{remark}

\section{Conclusions and future research directions}\label{sec:conclusions}

In this paper we obtain summation and composition rules for subdifferential calculus, and prove that under some reasonable conditions the subdifferential is a maximal abstract monotone operator. We also highlighted one of the challenges in the extension of abstract convexity and developing numerical methods: the fact that it is not always possible to separate two abstract convex disjoint sets. This leads us to Open Problem~\ref{prob:Minkowski}.

  \begin{problem}\label{prob:Minkovski}
    What conditions over the set $L$ are sufficient for a Minkovski-like separation result?
  \end{problem}

It is natural to assume that, similar to classical convex analysis, the optimality conditions are formulated in terms of maximal deviations (Proposition~\ref{prop:maximumrule}). This is where Open Problem~\ref{prob:Minkovski} appeared. This problem potentially leads to one more subdifferential calculus rule (maximum).

Finally, this paper identifies one more potential application for abstract convexity: discrete and integer optimisation.

  \printbibliography
\end{document}